\documentclass[a4paper,12pt,final]{amsart}

\usepackage{amssymb}
\usepackage{showkeys}
\usepackage[pdfauthor={Eugen Stumpf},%
pdftitle={Local stability analysis of differential equations with state-dependent delay},%
colorlinks=true]{hyperref}

\newtheorem{thm}{Theorem}[section]
\newtheorem{prop}[thm]{Proposition}

\newtheorem{cor}[thm]{Corollary}
\theoremstyle{definition}

\addtolength{\textwidth}{+2cm}
\addtolength{\oddsidemargin}{-1cm}
\addtolength{\evensidemargin}{-1cm}
\addtolength{\textheight}{+1cm}
\addtolength{\topmargin}{-1cm}
\setlength{\topskip}{0cm}

\theoremstyle{remark}

\newtheorem{remark}[thm]{Remark}

\numberwithin{equation}{section}
\newcommand{\N}{\mathbb{N}}  
\newcommand{\R}{\mathbb{R}}  
\newcommand{\C}{\mathbb{C}}  
\DeclareMathOperator{\id}{id} 

\title[Stability analysis of state-dependent delay equations ]{Local stability analysis of differential equations with state-dependent delay}
\author{Eugen Stumpf}
\address{Department of Mathematics, University of Hamburg, Bundesstrasse 55, 20146 Hamburg, Germany}
\email{eugen.stumpf@math.uni-hamburg.de}
\urladdr{www.math.uni-hamburg.de/home/stumpf/index\_en.html}
\subjclass[2010]{34K19, 34K20, 34K25}
\keywords{functional differential equation, state-dependent delay, stability analysis}

{}
\copyrightinfo{2015}{Eugen Stumpf}

\PII{}

\begin{document}

\begin{abstract}
  In the present article, we discuss some aspects of the local stability analysis for a class of abstract functional differential equations. This is done under smoothness assumptions which are often satisfied in the presence of a state-dependent delay. Apart from recapitulating the two classical principles of linearized stability and instability, we deduce the analogon of the \textit{Pliss reduction principle} for the class of differential equations under consideration. This reduction principle enables to determine the local stability properties of a solution in the situation where the linearization does not have any eigenvalues with positive real part but at least one eigenvalue on the imaginary axis.
\end{abstract}
\maketitle
\section{Introduction}
Let $h>0$ and $n\in\N$ be fixed. Further, after choosing some norm $\|\cdot\|_{\R^{n}}$ in $\R^{n}$, let us denote by $C$ the Banach space of all continuous functions $\phi:[-h,0]\to\R^{n}$ provided with the norm $\|\phi\|_{C}:=\sup_{s\in[-h,0]}\|\phi(s)\|_{\R^{n}}$. Similarly, $C^{1}$ denotes the Banach space of all continuously differentiable $\phi:[-h,0]\to\R^{n}$ with the norm given by $\|\phi\|_{C^{1}}:=\|\phi\|_{C}+\|\phi^{\prime}\|_{C}$. For any continuous function $x:I\to\R^{n}$ defined on some interval $I\subset\R$ and any real $t\in\R$ with $[t-h,t]\subset I$, let $x_{t}\in C$ denote the so-called \textit{segment} of $x$ at $t$, that is, the function $x_{t}:[-h,0]\to\R^{n}$ defined by $x_{t}(\theta):=x(t+\theta)$ for all $-h\leq \theta\leq 0$.

In what follows, we consider the functional differential equation
\begin{equation}\label{eq: FDE}
  x^{\prime}(t)=f(x_{t})
\end{equation}
defined by some map $f:U\to\R^{n}$ from an open neighborhood $U\subset C^{1}$ of the origin in $C^{1}$ into $\R^{n}$ with $f(0)=0$. In doing so, we have in mind that Eq. \eqref{eq: FDE} represents a differential equation with a state-dependent delay in a more abstract form. In order to clarify this point, consider for simplicity the differential equation
\begin{equation}\label{eq: DDE}
  x^{\prime}(t)=\tilde{g}(x(t-r(x(t))))
\end{equation}
with a map $\tilde{g}:\R^{n}\to\R^{n}$ satisfying $\tilde{g}(0)=0$ and with a discrete state-dependent delay given by some function $r:\R^{n}\to[0,h]$. Defining the map $\tilde{f}:C^{1}\to\R^{n}$ by
\begin{equation*}
  \tilde{f}(\phi):=\tilde{g}(\phi(-r(\phi(0)))),
\end{equation*}
we see that Eq. \eqref{eq: DDE} can be written in the more abstract form
\begin{equation*}
  x^{\prime}(t)=\tilde{g}(x(t-r(x(t))))=\tilde{g}(x_{t}(-r(x_{t}(0))))=\tilde{f}(x_{t})
\end{equation*}
of Eq. \eqref{eq: FDE}. Hence, instead of studying the original differential equation \eqref{eq: DDE}, we may as well study Eq. \eqref{eq: FDE}.

The proposed transformation works also for many other differential equations with state-dependent delay. In addition, observe that in the discussed example the map $\tilde{f}$ could have been defined on the greater Banach space $C$ and not on $C^{1}$. Then Eq. \eqref{eq: FDE} would form a so-called \textit{retarded functional differential equation} as considered for example in Diekmann et al. \cite{Diekmann1995}. But in contrast to the constant delay case, the theory of retarded functional differential equations is in general not applicable in the presence of a state-dependent delay (see for instance Walther \cite{Walther2003}).

A \textit{solution} of Eq. \eqref{eq: FDE} is either a $C^{1}$-smooth function $x:[t_{0}-h,t_{e})\to\R^{n}$, $t_{0}<t_{e}\leq\infty$, such that $x_{t}\in U$ for all $t_{0}\leq t<t_{e}$ and $x$ satisfies \eqref{eq: FDE} as $t_{0}<t<t_{e}$, or a $C^{1}$-smooth function $x:\R\to\R^{n}$ such that $x_{t}\in U$ for each $t\in\R$ and Eq. \eqref{eq: FDE} is satisfied everywhere in $\R$. For instance, $x:\R\ni t\mapsto 0\in\R^{n}$ is a solution of Eq. \eqref{eq: FDE} in view of the assumption $f(0)=0$.

In order to get further solutions of Eq. \eqref{eq: FDE}, we shall make two standing smoothness assumptions on the map $f$ under consideration:
\begin{itemize}
  \item[(S1)] $f$ is continuously differentiable, and
  \item[(S2)] for each $\phi\in U$ the derivative $Df(\phi):C^{1}\to\R^{n}$ extends to a linear map $D_{e}f(\phi):C\to\R^{n}$ such that
  \begin{equation*}
    U\times C\ni(\phi,\psi)\mapsto D_{e}f(\phi)\psi\in\R^{n}
  \end{equation*}
  is continuous.
\end{itemize}
In particular, these conditions are typically satisfied in cases where $f$ represents the right-hand side of a differential equation with state-dependent delay.  Provided that $f$ satisfies (S1) and (S2), the results in Walther \cite{Walther2003} show that for each $\phi\in X_{f}$ with $X_{f}$ defined by
\begin{equation*}
  X_{f}:=\left\lbrace\psi\in U\mid \psi^{\prime}(0)=f(\psi)\right\rbrace,
\end{equation*}
there is a uniquely determined $t_{+}(\phi)>0$ and a (in the forward $t$-direction) non-continuable solution $x^{\phi}:[-h,t_{+}(\phi))\to\R^{n}$ of Eq. \eqref{eq: FDE} with initial value $x^{\phi}_{0}=\phi$. Moreover, all segments $x_{t}^{\phi}$, $0\leq t< t_{+}(\phi)$ and $\phi\in X_{f}$, are contained in the \textit{solution manifold} $X_{f}$ and the relations
\begin{equation*}
  F(t,\phi):=x^{\phi}_{t}
\end{equation*}
define a continuous semiflow $F:\Omega\to X_{f}$ with domain
\begin{equation*}
  \Omega:=\left\lbrace(t,\psi)\in[0,\infty)\times X_{f}\mid 0\leq t< t_{+}(\psi)\right\rbrace
\end{equation*}
and continuously differentiable time-$t$-maps
\begin{equation*}
  F_{t}:\lbrace \psi\in X_{f}\mid 0\leq t<t_{+}(\psi)\rbrace\ni\phi\mapsto F(t,\phi)\in X_{f}.
\end{equation*}

In the context of the semiflow $F$, the trivial solution $x:\R\ni t\mapsto 0\in\R^{n}$ of Eq. \eqref{eq: FDE} is the equivalent of the stationary point $\phi_{0}:=0\in X_{f}$ of $F$ as we clearly have $F(t,0)=0$ for all $t\in\R$. In order to describe the qualitative behavior of some other solutions of Eq. \eqref{eq: FDE} in close vicinity of the trivial one, it is natural to analyse the stability properties of the stationary point $\phi_{0}$ of $F$. Recall that $\phi_{0}$ is
called \textit{stable} if for each $\epsilon>0$ there is some constant $\delta(\epsilon)>0$ such that for all $\phi\in X_{f}$ with $\|\phi-\phi_{0}\|_{C^{1}}=\|\phi\|_{C^{1}}<\delta(\epsilon)$ it follows that $t_{+}(\phi)=\infty$ and that $\|F(t,\phi)-F(t,\phi_{0})\|_{C^{1}}=\|F(t,\phi)\|_{C^{1}}<\epsilon$ for all $t\geq 0$. Otherwise, we call $\phi_{0}$ \textit{unstable}. So, in the situation of stability of $\phi_{0}$, each sufficiently small initial value $\phi\in X_{f}$ leads to a solution $x^{\phi}$ of Eq. \eqref{eq: FDE} which exists and remains small for all non-negative $t\in\R$. On the other hand, if $\phi_{0}$ is unstable then there exists an open neighborhood $V$ of $0\in X_{f}$ with the property that for any $\delta>0$ we find an initial value $\phi\in V$ with $\|\phi-\phi_{0}\|_{C^{1}}=\|\phi\|_{C^{1}}<\delta$ such that the associated trajectory $[0,t_{+}(\phi))\ni t\mapsto x_{t}^{\phi}=F(t,\phi)\in X_{f}$ of $F$ leaves the neighborhood $V$ of $\phi_{0}=0$ for some finite $0<t<t_{+}(\phi)$.

One of the most common methods for the stability analysis of stationary points of flows or semiflows is based on the study of the linearization and its spectrum. In the situation of the semiflow $F$ and the stationary point $\phi_{0}$ considered here, the linearization is given by the $C_{0}$-semigroup $T:=\lbrace T(t)\rbrace_{t\geq 0}$ of bounded linear operators $T(t):=D_{2}F(t,\phi_{0})=D_{2}F(t,0)$ acting on the Banach space
\begin{equation*}
  T_{0}X_{f}:=\lbrace\psi\in C^{1}\mid \psi^{\prime}(0)=Df(0)\psi\rbrace,
\end{equation*}
which is equipped with the norm $\|\cdot\|_{C^{1}}$ of the larger space $C^{1}$. For the action of an operator $T(t)$ on some $\chi\in T_{0}X_{f}$ we have $T(t)\chi=v_{t}^{\chi}$, where $v^{\chi}_{t}$ is the segment of the unique solution $v^{\chi}:[-h,\infty)\to\R^{n}$ of the linearized variational equation
\begin{equation}\label{eq: linearization}
  v^{\prime}(t)=Df(0)v_{t}
\end{equation}
with initial value $v_{0}^{\chi}=\chi$. In particular, $0\in T_{0}X_{f}$ and $T(t)0=0$ for all $t\geq 0$; that is, $0\in T_{0}X_{f}$ is a stationary point of $T$ and, in context of Eq. \eqref{eq: linearization}, it is the equivalent of the trivial solution $v:\R\ni t\mapsto 0\in \R^{n}$. The infinitesimal generator of $T$ is the linear operator $G:\mathcal{D}(G)\ni\chi\mapsto \chi^{\prime}\in T_{0}X_{f}$
with domain
\begin{equation*}
  \mathcal{D}(G):=\lbrace\psi\in C^{2}\mid \psi^{\prime}(0)=Df(0)\psi, \psi^{\prime\prime}(0)=Df(0)\psi^{\prime}\rbrace
\end{equation*}
contained in the set $C^{2}$ of all twice continuously differentiable $\chi:[-h,0]\to\R^{n}$.

Now, the spectrum $\sigma(G)\subset \C$ of $G$ determines not only the stability properties of the trivial stationary point of the linearization $T$ of $F$ but in certain situations also the stability properties of the trivial stationary point $\phi_{0}=0$ of $F$. To make it more apparent, observe that by using the linear operator $L:=Df(0)\in\mathcal{L}(C^{1},\R^{n})$ and the generally nonlinear map $g:U\ni\phi\mapsto f(\phi)-L\phi\in\R^{n}$, we may rewrite Eq. \eqref{eq: FDE} into the form
\begin{equation}\label{eq: FDE_linear_nonlinear}
  x^{\prime}(t)=Lx_{t}+g(x_{t}).
\end{equation}
Both, $L$ and $g$, inherit properties (S1) and (S2) from $f$, and we clearly have $g(0)=0\in\R^{n}$ and $Dg(0)=0\in\mathcal{L}(C^{1},\R^{n})$. So, in close vicinity of $0\in C^{1}$ the map $g$ is small in a sense and, under certain conditions on $\sigma(G)$, the linear part on the right-hand side of Eq. \eqref{eq: FDE_linear_nonlinear} has such a strong impact on the local dynamic near the origin such that the trivial solution of Eq. \eqref{eq: FDE_linear_nonlinear}, and so of Eq. \eqref{eq: FDE}, has the same stability properties as the trivial solution of the linearized variational equation \eqref{eq: linearization}. However, before we will discuss this point in length, we shall point out that the semigroup $T$ and its generator $G$ are closely related to another strongly continuous semigroup and the associated infinitesimal generator. For this purpose, recall that due to assumption (S2) the operator $Df(0):C^{1}\to\R^{n}$ extends to a bounded linear operator $L_{e}:=D_{e}f(0):C\to\R^{n}$. In particular, $L_{e}$ defines the linear retarded functional differential equation
\begin{equation*}
  v^{\prime}(t)=L_{e}v_{t}.
\end{equation*}
The corresponding Cauchy problem
\begin{equation*}
  \left\lbrace
  \begin{aligned}
    v^{\prime}(t)&=L_{e}v_{t}\\
    v_{0}&=\chi
  \end{aligned}
  \right.
\end{equation*}
 has for each $\chi\in C$ a uniquely determined solution; that is, for each $\chi\in C$ there is a unique continuous function $v^{\chi}:[-h,\infty)\to\R^{n}$ which is continuously differentiable on $(0,\infty)$, satisfies the linear retarded functional differential equation for all $t>0$, and the segment of $v^{\chi}$ at $t=0$ coincides with the initial value $\chi$. The relations $T_{e}(t)\chi=v_{t}^{\chi}$ for $\chi\in C$ and $t\geq 0$ induce a $C_{0}$-semigroup $T_{e}:=\lbrace T_{e}(t)\rbrace_{t\geq 0}$ on the Banach space $C$. Its infinitesimal generator is given by $G_{e}:\mathcal{D}(G_{e})\ni \chi\mapsto \chi^{\prime}\in C$ with domain
 \begin{equation*}
   \mathcal{D}(G_{e}):=\lbrace \psi\in C^{1}\mid\psi^{\prime}(0)=L_{e}\psi\rbrace.
 \end{equation*}
 The last set clearly coincides with $T_{0}X_{f}$. Moreover, as discussed in Hartung et al. \cite{Hartung2006}, we have $T(t)\phi=T_{e}(t)\phi$ for all $\phi\in \mathcal{D}(G_{e})$ and all $t\geq 0$, and the two spectra $\sigma(G_{e}),\sigma(G)\subset\C$ of the generators $G_{e}$, $G$, respectively, are identical. The spectrum $\sigma(G_{e})$, and so as well the spectrum $\sigma(G)$, is given by the zeros of a familiar characteristic equation. It is discrete and contains only eigenvalues whose generalized eigenspaces are finite-dimensional. In addition, for any $\beta\in\R$ the intersection $\lbrace\lambda\in \C\mid\Re(\lambda)>\beta\rbrace\cap \sigma(G_{e})$ is either empty or finite.

But let us return to the question of stability of the stationary point $\phi_{0}=0$ of the semiflow $F$.
Suppose that spectrum $\sigma(G_{e})$ and so $\sigma(G)$ contains at least one eigenvalue with positive real part. Then, by the \textit{principle of linearized instability}, $\phi_{0}$ is an unstable stationary point of the semiflow $F$. On the other hand, assume that all eigenvalues of $G_{e}$ have negative real part. Then, by the \textit{principle of linearized stability}, $\phi_{0}$ is a stable stationary point of the semiflow $F$. To be more precisely, in this situation $\phi_{0}$ is even \emph{locally asymptotically stable}; that is, it is stable and \emph{attractive}. Here, the last point means that there is some $\epsilon>0$ such that for all $\phi\in X_{f}$ with $\|\phi-\phi_{0}\|_{C^{1}}=\|\phi\|_{C^{1}}<\epsilon$ we have $t_{+}(\phi)=\infty$ and
\begin{equation*}
  \|F(t,\phi)-\phi_{0}\|_{C^{1}}=\|F(t,\phi)\|_{C^{1}}=\|x^{\phi}_{t}\|_{C^{1}}\to 0\qquad\text{as}\quad t\to\infty.
\end{equation*}
So, each sufficiently small initial data in $X_{f}$ does lead to a solution of Eq. \eqref{eq: FDE} that does not only exist and stay small for all $t\geq 0$ but also converges to $0$ as $t\to\infty$. In summary, we obtain the following theorem about local stability analysis of the semiflow $F$ at the stationary point $\phi_{0}=0$ via the spectrum of its linearization.
\begin{thm}\label{thm: linearized}
  Let $f:U\to\R^{n}$ defined on some open neighborhood $U\subset C^{1}$ of $0\in C^{1}$ with $f(0)=0$ be given and suppose that $f$ satisfies the two smoothness assumptions (S1) and (S2).
  \begin{itemize}
    \item[(i)] If there is some $\lambda\in \sigma(G_{e})$ with $\Re(\lambda)>0$ then $\phi_{0}=0$ is unstable for the semiflow $F$ generated by Eq. \eqref{eq: FDE}.
    \item[(ii)] If $\Re(\lambda)<0$ for all $\lambda\in\sigma(G_{e})$ then $\phi_{0}=0$ is locally asymptotically stable for the semiflow $F$ generated by Eq. \eqref{eq: FDE}.
  \end{itemize}
\end{thm}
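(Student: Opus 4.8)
The plan is to derive both assertions from the classical principles of linearized (in)stability for $C^1$ maps near a fixed point, applied to a single time-$t_1$ map of the semiflow $F$. Fix $t_1>0$ large enough that the bounded linear operator $T(t_1)$ on $(T_0X_f,\|\cdot\|_{C^1})$ is compact; such a $t_1$ exists because solutions of the linearized equation \eqref{eq: linearization} become smoother as $t$ grows, so that $T_e$, and hence $T$, is eventually compact. The time-$t_1$ map $F_{t_1}$ is continuously differentiable on an open neighborhood of $\phi_0=0$ in the $C^1$-submanifold $X_f$ of $C^1$, it fixes $\phi_0$, and its derivative there is $DF_{t_1}(\phi_0)=D_2F(t_1,\phi_0)=T(t_1)$ on the tangent space $T_0X_f$. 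Choosing a $C^1$-chart of $X_f$ at $\phi_0$ whose differential at $\phi_0$ is the identity on $T_0X_f$, we may regard $F_{t_1}$ as a $C^1$ self-map $\Phi$ of an open neighborhood of $0$ in the Banach space $(T_0X_f,\|\cdot\|_{C^1})$ with $\Phi(0)=0$ and $D\Phi(0)=T(t_1)$. Since $G$ generates $T$ and $\sigma(G)=\sigma(G_e)$, the spectral inclusion theorem gives $\exp(t_1\lambda)\in\sigma(T(t_1))$ for every $\lambda\in\sigma(G_e)$, and, $T$ being eventually compact, the exponential growth bound of $T$ equals $\sup\{\Re(\lambda)\mid\lambda\in\sigma(G_e)\}$.

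\emph{Proof of (i).} Let $\lambda\in\sigma(G_e)$ with $\Re(\lambda)>0$. Then $\exp(t_1\lambda)\in\sigma(T(t_1))$ has modulus $\exp(t_1\Re(\lambda))>1$, so $\sigma(T(t_1))$ meets $\{z\in\C\mid|z|>1\}$. As $T(t_1)$ is compact, this part of the spectrum is a finite, nonempty set of eigenvalues with finite-dimensional generalized eigenspaces, and it is strictly separated from the rest of $\sigma(T(t_1))$, which lies in $\{z\in\C\mid|z|\le1\}$. Consequently $T_0X_f=E_u\oplus E_s$ splits into closed $T(t_1)$-invariant subspaces with $E_u$ finite-dimensional and nontrivial, $\sigma(T(t_1)|_{E_u})\subset\{|z|>1\}$, and $\sigma(T(t_1)|_{E_s})\subset\{|z|\le1\}$. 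The classical principle of linearized instability for $C^1$ maps then shows that $0$ is an unstable fixed point of $\Phi$: for instance, $\Phi$ admits a nontrivial local unstable manifold tangent to $E_u$ at $0$, along which every nonzero point is repelled. Hence $\phi_0$ is unstable for the map $F_{t_1}$, and since for each initial value the forward $F_{t_1}$-orbit is contained in the corresponding trajectory $t\mapsto F(t,\phi)$ of $F$ (or that trajectory ceases to exist after finite time), $\phi_0$ is unstable for the semiflow $F$ as well.

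\emph{Proof of (ii).} Assume $\Re(\lambda)<0$ for every $\lambda\in\sigma(G_e)$. Because $\{\lambda\in\C\mid\Re(\lambda)\ge-1\}\cap\sigma(G_e)$ is finite, the number $\alpha_0:=\sup\{\Re(\lambda)\mid\lambda\in\sigma(G_e)\}$ is negative (it is the maximum of finitely many negative reals, with $\alpha_0=-\infty$ if $\sigma(G_e)=\emptyset$). Since the growth bound of $T$ equals $\alpha_0$, there are $M\ge1$ and $\alpha\in(\alpha_0,0)$ with $\|T(t)\|\le Me^{\alpha t}$ for all $t\ge0$; in particular the spectral radius $r(T(t_1))$ of $D\Phi(0)=T(t_1)$ satisfies $r(T(t_1))\le e^{\alpha t_1}<1$, using $\|T(t_1)^n\|=\|T(nt_1)\|$. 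The classical principle of linearized stability for $C^1$ maps now yields — after passing to an equivalent norm on $T_0X_f$ adapted to $T(t_1)$ and shrinking the domain of $\Phi$ — a closed ball $B$ around $0$ with $\Phi(B)\subset B$ and $\|\Phi^n(\psi)\|_{C^1}\le Cq^n$ for all $\psi\in B$, $n\in\N$, where $q\in(r(T(t_1)),1)$ and $C\ge1$. Transporting this back through the chart, we obtain a neighborhood $W$ of $\phi_0$ in $X_f$ with $F_{t_1}(W)\subset W$ and $F_{t_1}^{\,n}(\phi)\to\phi_0$ in $C^1$ as $n\to\infty$, uniformly in $\phi\in W$; iterating $F_{t_1}$ furthermore forces $t_{+}(\phi)\ge nt_1$ for every $n$, hence $t_{+}(\phi)=\infty$. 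Finally, for $t\ge0$ write $t=nt_1+s$ with $0\le s<t_1$. An elementary Gronwall estimate for Eq.~\eqref{eq: FDE} (using $f(0)=0$ and the local Lipschitz continuity of $f$) provides a constant $K$ with $\|F(s,\psi)\|_{C^1}\le K\|\psi\|_{C^1}$ whenever $0\le s\le t_1$ and $\psi$ is sufficiently close to $\phi_0$, so that $\|F(t,\phi)\|_{C^1}=\|F(s,F_{t_1}^{\,n}(\phi))\|_{C^1}\le K\,\|F_{t_1}^{\,n}(\phi)\|_{C^1}\to0$ as $t\to\infty$, uniformly in $\phi\in W$. This is precisely the local asymptotic stability of $\phi_0$ for the semiflow $F$.

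I expect the main difficulty to be the reduction step itself together with the passage between the discrete-time map $F_{t_1}$ and the continuous-time semiflow $F$: one must work inside a $C^1$-chart of the solution manifold $X_f$, identify $DF_{t_1}(\phi_0)$ with the compact operator $T(t_1)$ and locate its spectrum in terms of $\sigma(G_e)$, and — in part (ii) — upgrade the geometric decay of the iterates $F_{t_1}^{\,n}$ both to global existence $t_{+}(\phi)=\infty$ and to decay of the whole trajectory via a uniform bound on the intervals $[nt_1,(n+1)t_1]$. Once this scaffolding is in place, (i) and (ii) follow from the standard (in)stability theory for $C^1$ maps at a fixed point whose linearization is, in the spectral sense, expanding on a nontrivial finite-dimensional subspace, respectively a contraction.
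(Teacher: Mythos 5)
You should first note that the paper does not actually prove Theorem \ref{thm: linearized}: it quotes part (ii) from Hartung et al.\ \cite[Theorem 3.6.1]{Hartung2006}, where local exponential stability is obtained by direct estimates on the semiflow combined with the exponential decay of the linearized semigroup, and part (i) from \cite[Proposition 1.4]{Stumpf2010}, which rests on a local unstable manifold of the \emph{semiflow} (in the spirit of Krisztin \cite{Krisztin2003}). Your route is genuinely different: you discretize time, view $F_{t_{1}}$ in a chart of $X_{f}$ as a $C^{1}$ self-map $\Phi$ with $D\Phi(0)=T(t_{1})$, and reduce everything to the classical fixed-point (in)stability theorems for maps, using eventual compactness of $T$ and the spectral mapping theorem to translate the hypotheses on $\sigma(G_{e})$ into spectral information about $T(t_{1})$. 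This is a coherent and economical strategy; what it buys is that all the delicate dynamics is concentrated in one well-understood discrete-time theorem, at the price of having to justify the functional-analytic scaffolding (eventual compactness of $T$ in the $C^{1}$-topology, not merely of $T_{e}$ in $C$; the spectral mapping theorem $\sigma(T(t))\setminus\lbrace 0\rbrace=e^{t\sigma(G)}$ and the equality of growth bound and spectral bound, which you assert but should derive, e.g.\ by transferring from $T_{e}$ via $T(t)=T_{e}(t)\vert_{\mathcal{D}(G_{e})}$) and the passage between discrete and continuous time.

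Two steps need repair, though neither is fatal. First, eventual compactness of $T(t)$ on $(T_{0}X_{f},\|\cdot\|_{C^{1}})$ does not follow just from ``solutions become smoother''; you need quantitative bounds on $v_{t}$, $v_{t}^{\prime}$ and $v_{t}^{\prime\prime}$ (available for $t\geq 2h$ from $v^{\prime}(t)=L_{e}v_{t}$ and $v^{\prime\prime}(t)=L_{e}v_{t}^{\prime}$) so that Arzel\`a--Ascoli applies to both the segments and their derivatives, exactly as in the proof of Proposition \ref{prop: global solutions}. Second, and more importantly, the ``elementary Gronwall estimate'' for $\|F(s,\psi)\|_{C^{1}}$ on $[0,t_{1}]$ does not close if you only use Lipschitz continuity of $f$ on $C^{1}$: the derivative part $\|x_{t}^{\prime}\|_{C}$ of the norm is a supremum of $|f(x_{s})|$ over $s\in[t-h,t]$, not an integral, so the naive inequality feeds $\sup_{s\leq t}\|x_{s}\|_{C^{1}}$ back into itself with a Lipschitz constant that need not be smaller than $1$. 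The correct argument uses (S2): near $0$ one has $\|f(\phi)\|_{\R^{n}}\leq\sup\|D_{e}f\|\,\|\phi\|_{C}$, so one first bounds $\|x_{t}\|_{C}$ by Gronwall in the $C$-norm and only then reads off $\|x_{t}^{\prime}\|_{C}$; alternatively, continuity of $F$ along the compact set $[0,t_{1}]\times\lbrace\phi_{0}\rbrace$ already gives the uniform smallness needed to interpolate between the times $nt_{1}$. This is precisely the point where the state-dependent-delay setting departs from the ODE picture, and it is the reason the paper's standing assumption (S2) is indispensable in your argument as well.
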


A detailed proof of assertion (ii) is contained in Hartung et al. \cite[Theorem 3.6.1]{Hartung2006} whereas a proof of statement (i) can be found in \cite[Proposition 1.4]{Stumpf2010}.
Further, we shall mentioned two points related to the theorem above.
\begin{remark}
  1. Recall, that in general, the properties of stability and attraction are independent from one another. In particular, there exist both examples of stable but not attractive as well as examples of attractive but not stable stationary points of semiflows (compare, for instance, Amann \cite[Remark 15.1(d)]{Amann1990}).

  2. The assertion of the principle of the linearized stability, that is, part (ii) of Theorem \ref{thm: linearized}, goes even further than only local asymptotic stability of the stationary point $\phi_{0}$. In fact, the rate of the attraction is exponential. More precisely, we find reals $\epsilon>0$, $K>0$ and $\omega>0$ such that for all $\phi\in X_{f}$ with $\|\phi-\phi_{0}\|_{C^{1}}=\|\phi\|_{C^{1}}<\epsilon$ we have $t_{+}(\phi)=\infty$ and
\begin{equation*}
  \|F(t,\phi)-\phi_{0}\|_{C^{1}}=\|F(t,\phi)\|_{C^{1}}\leq K\,e^{-\omega t}
\end{equation*}
as $t\geq 0$.
\end{remark}

The new ingredient of this paper is now the study of the situation where, under the standing smoothness assumptions (S1) and (S2) on $f$, an application of Theorem \ref{thm: linearized} fails in order to draw any conclusions about the local stability properties of $\phi_{0}$ from the linearized differential equation and its spectrum. This clearly occurs when the spectrum $\sigma(G_{e})$ of the linearization does not have any eigenvalue with positive real part but at least one eigenvalue on the imaginary axis. In our main result Theorem \ref{thm: reduction} we show that, under the described conditions, $\phi_{0}$ has the same local stability behavior as the zero solution of the ordinary differential equation obtained by the reduction of Eq. \eqref{eq: FDE} to a local center manifold of $F$ at $\phi_{0}$.

Note that Theorem \ref{thm: reduction} is completely in analogy with the theory of ordinary differential equations where the analog statement is known as the \textit{Pliss reduction principle} (compare Pliss \cite{Pliss1964} and Vanderbauwhede \cite[Theorem 5.18]{Vanderbauwhede1989}). Moreover, in order to show Theorem \ref{thm: reduction} we follow the proof of the Pliss reduction principle given in Vanderbauwhede \cite{Vanderbauwhede1989} and at the first glance we will need only negligible modifications. But observe that the key ingredient of the approach is an attraction property of so-called local center-unstable manifolds, and the proof of this attraction property in case of Eq. \eqref{eq: FDE} differs in some parts essentially from the one in the situation of an ordinary differential equation. A reason for that is the fact that, in contrast to an ordinary differential equation, a solution of Eq. \eqref{eq: FDE} may generally not be continued in the backward time direction. However, the attraction property used in this paper is stated in Proposition \ref{prop: attraction} and it is a consequence of the main results in \cite{Stumpf2014}.

The rest of this paper is organized as follows. The next section is devoted to local invariant manifolds of the semiflow $F$ at the stationary point $\phi_{0}=0$. At first, we recap the existence and some properties of so-called local center-unstable and local center manifolds.
After that we show, that under certain assumption on the spectrum $\sigma(G_{e})$, the two classes of local invariant manifolds coincide in the sense that each local center-unstable manifold is also a local center manifold and vice versa. Then we proceed with the discussion of the possibility to reduce the dynamic of the semiflow $F$ near $\phi_{0}=0$ to such an invariant manifold. This point will be essential for the formulation of our main result, which we will state and prove in Section 3. In the final section, we close the present paper with the discussion of a concrete example for the application of Theorem \ref{thm: linearized} as well as of Theorem \ref{thm: reduction}.

\section{Local center and center-unstable manifolds}
Recall that the spectrum $\sigma(G_{e})$ is given by the zeros of a characteristic equation, it is discrete and it consists only of eigenvalue with finite dimensional generalized eigenspaces. In addition, we have the decomposition
\begin{equation*}
\sigma(G_{e})=\sigma_{u}(G_{e})\cup \sigma_{c}(G_{e})\cup \sigma_{s}(G_{e})
\end{equation*}
where $\sigma_{u}(G_{e})$, $\sigma_{c}(G_{e})$ and $\sigma_{s}(G_{e})$ are subsets of $\sigma(G_{e})$ with eigenvalues with positive, zero, and negative real part, respectively.
Since for every $\beta\in\R$ the set $\lbrace\lambda\in\C\mid\Re(\lambda)>\beta\rbrace\cap \sigma(G_{e})$ is either empty or finite, each of the sets $\sigma_{u}(G_{e})$ and $\sigma_{c}(G_{e})$ is either empty or finite as well. In particular, the associated realified generalized eigenspaces $C_{u}\subset C$ and $C_{c}\subset C$, which are called the \emph{unstable} and the \emph{center space}, respectively, are both finite dimensional and contained in $\mathcal{D}(G_{e})\subset C^{1}$. On the other hand, the \emph{stable space} $C_{s}\subset C$, which is the realified generalized eigenspace associated with $\sigma_{s}(G_{e})$, is infinite dimensional. However, each of these three subspaces is invariant under the generator $G_{e}$ and altogether they provide the decomposition
\begin{equation*}
  C=C_{u}\oplus C_{c}\oplus C_{s}
\end{equation*}
of the Banach space $C$. Moreover, the intersection $C^{1}_{s}:=C_{s}\cap C$ is closed in $C^{1}$ such that we also obtain the decomposition
\begin{equation*}
  C^{1}=C_{u}\oplus C_{c}\oplus C_{s}^{1}
\end{equation*}
of the smaller Banach space $C^{1}$.

Assume now that, apart from our assumptions on $f$ so far, $C_{cu}:=C_{c}\oplus C_{u}\not=\lbrace 0\rbrace$. Then the main results in \cite[compare Theorems 1 \& 2]{Stumpf2011} show that in close vicinity of the origin in $X_{f}$ we find a so-called \textit{local center-unstable manifold} $W_{cu}$ of $F$ at the stationary point $\phi_{0}=0$; that is, there exist open neighborhoods $C_{cu,0}$ of $0$ in $C_{cu}$ and $C_{s,0}^{1}$ of $0$ in $C^{1}_{s}$ with $N_{cu}:=C_{cu,0}+ C_{s,0}^{1}\subset U$ and a continuously differentiable map $w_{cu}:C_{cu,0}\to C_{s,0}^{1}$ with $w_{cu}(0)=0$ and $Dw_{cu}(0)=0$ such that the graph
\begin{equation*}
  W_{cu}:=\lbrace\phi+w_{cu}(\phi)\mid\phi\in C_{cu,0}\rbrace,
\end{equation*}
which clearly contains $\phi_{0}=0$, has the properties below.
\begin{itemize}
    \item[(i)] $W_{cu}\subset X_{f}$ and $W_{cu}$ is a $C^{1}$-submanifold of $X_{f}$ with $\dim W_{cu}=\dim C_{cu}$.
    \item[(ii)] $W_{cu}$ is positively invariant with respect the semiflow $F$ relative to $N_{cu}$; that is, for each $\phi\in W_{cu}$ and all $t\geq 0$ with $\lbrace F(s,\phi)\mid 0\leq s\leq t\rbrace\subset N_{cu}$ we have $\lbrace F(s,\phi)\mid 0\leq s\leq t\rbrace\subset W_{cu}$.
    \item[(iii)] $W_{cu}$ contains the image $\gamma((-\infty,0])$ of any trajectory $\gamma:(-\infty,0]\to X_{f}$ of $F$ with $\gamma(t)\in N_{cu}$ for all $t\leq 0$.
\end{itemize}

As proven in \cite[Theorem 1.2]{Stumpf2014} such a local center-unstable manifold $W_{cu}$ is also attractive in the following sense:
For each $\phi\in X_{f}$ with $t_{+}(\phi)=\infty$ and with $F(t,\phi)$ being sufficiently small for all $t\geq 0$ there is some $\psi\in X_{f}$ with $t_{+}(\psi)=\infty$ such that $F(t,\psi)\in W_{cu}$ as $t\geq 0$ and such that $F(t,\psi)-F(t,\phi)\to 0$ exponentially for $t\to \infty$.
 In other words, $W_{cu}$ attracts the segments of all solutions of Eq. \eqref{eq: FDE_linear_nonlinear} which exist and remain sufficiently close to the stationary point $\phi_{0}=0$ for all $t\geq 0$. But even more is true as we shall see in the next proposition that forms a local version of Corollary 5.11 in \cite{Stumpf2014}.

\begin{prop}\label{prop: attraction}
  There exist open neighborhoods $\mathcal{V},\mathcal{D}$ of $\phi_{0}=0$ in $X_{f}$, a real $\eta_{A}>0$, and a continuous map $H:\mathcal{D}\to W_{cu}$ with $H(0)=0$ such that for each $\epsilon_{A}>0$ there is some $\delta_{A}>0$ with the property that for all $\phi\in \mathcal{D}$ with $\|\phi\|_{C^{1}}\leq \delta_{A}$ and all
  $t\in[0,t_{+}(\phi))\cap [0,t_{+}(H(\phi)))$ with $F(s,\phi),F(s,H(\phi))\in \mathcal{V}$ as $0\leq s\leq t$
  \begin{equation*}
    \|F(t,\phi)-F(t,H(\phi))\|_{C^{1}}\leq \epsilon_{A} e^{-\eta_{A} t}.
  \end{equation*}
\end{prop}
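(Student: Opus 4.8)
The plan is to deduce the proposition from the global attraction statement already recorded in the excerpt, namely Corollary 5.11 of \cite{Stumpf2014} (the reference to which is made just before the proposition), by localizing it around the stationary point $\phi_0=0$. First I would recall the precise global statement from \cite{Stumpf2014}: there is a continuous map $H$ defined on (a subset of) $X_f$ with values in $W_{cu}$, fixing $0$, such that for every solution that exists and stays small on $[0,\infty)$ the difference $F(t,\phi)-F(t,H(\phi))$ decays exponentially, with a rate $\eta_A>0$ that is uniform. The task here is only to extract from this a \emph{local} and \emph{uniform-in-$\epsilon_A$} version on suitable neighborhoods $\mathcal V,\mathcal D$ of $0$, valid as long as both trajectories $F(s,\phi)$ and $F(s,H(\phi))$ remain in $\mathcal V$ for $0\le s\le t$, without any a priori assumption that $t_+(\phi)=\infty$.

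The key steps, in order, are as follows. (1) Fix $\eta_A>0$ to be the exponential rate furnished by Corollary 5.11 of \cite{Stumpf2014}; choose $\mathcal D$ to be a neighborhood of $0$ in $X_f$ on which the map $H$ from that corollary is defined and continuous, with $H(0)=0$. (2) Choose $\mathcal V$ small enough that the semiflow $F$ is well behaved on it — in particular so that solutions starting in $\mathcal D$ and remaining in $\mathcal V$ can be compared to the genuinely global solutions to which Corollary 5.11 applies; here one uses continuous dependence of $F$ on initial data together with the positive invariance of $W_{cu}$ relative to its neighborhood $N_{cu}$ (property (ii) of $W_{cu}$), shrinking $\mathcal V$ so that $\mathcal V\cap W_{cu}\subset N_{cu}$ and so that the relevant pieces of trajectory stay in the region where the estimates of \cite{Stumpf2014} are valid. (3) Given $\epsilon_A>0$, use the exponential estimate $\|F(t,\phi)-F(t,H(\phi))\|_{C^1}\le C\,e^{-\eta_A t}\,\|\phi - \text{(something vanishing at }0)\|$, or more simply the uniform continuity of the relevant objects at $0$, to pick $\delta_A>0$ so small that for $\|\phi\|_{C^1}\le\delta_A$ the constant in front of $e^{-\eta_A t}$ is bounded by $\epsilon_A$. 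Because $H(0)=0$ and $H$ is continuous, shrinking $\delta_A$ also forces $\|H(\phi)\|_{C^1}$ small, so both trajectories indeed start near $0$. (4) Conclude: for $\phi\in\mathcal D$ with $\|\phi\|_{C^1}\le\delta_A$ and every $t$ in the common existence interval for which $F(s,\phi),F(s,H(\phi))\in\mathcal V$ on $[0,t]$, the desired bound $\|F(t,\phi)-F(t,H(\phi))\|_{C^1}\le\epsilon_A e^{-\eta_A t}$ holds.

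The main obstacle I anticipate is step (2)–(3): passing from the \emph{global} hypothesis in Corollary 5.11 of \cite{Stumpf2014} (which presupposes $t_+(\phi)=\infty$ and global smallness of the trajectory) to a \emph{purely local} statement that only constrains the trajectory on the finite interval $[0,t]$ where it is observed to lie in $\mathcal V$. The standard device is a cut-off / modification argument: one replaces $f$ outside a small ball by a globally defined modified right-hand side $\tilde f$ that agrees with $f$ near $0$, so that every small solution of the modified equation is automatically global and small, applies the global result of \cite{Stumpf2014} to $\tilde f$, and then notes that as long as the original trajectory stays in $\mathcal V$ it coincides with the modified one and inherits the estimate. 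Care is needed to ensure the cut-off preserves (S1), (S2) and the spectral decomposition (so that $W_{cu}$ and $H$ for the modified equation restrict correctly to the original ones on $\mathcal V$); this is routine but is where the real content of "localization" sits. Once this modification is in place, the uniformity in $\epsilon_A$ is immediate from the uniform exponential rate $\eta_A$ together with the continuity of $H$ at $0$ and the fact that the pre-factor in the exponential estimate depends continuously on, and vanishes with, the initial data.
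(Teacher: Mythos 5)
Your proposal is correct and follows essentially the same route as the paper: the published proof likewise localizes Corollary 5.11 of \cite{Stumpf2014}, using the fact that the modified (cut-off) semiflow $F_{\delta}$ on the state space $X_{\delta}$ constructed there coincides with $F$ on $X_{f}$ in a sufficiently small neighborhood $\mathcal{V}$ of $0$, so that the global exponential estimate transfers to the original trajectories as long as they remain in $\mathcal{V}$. The only difference is that the cut-off you anticipate having to build is already part of the construction in \cite{Stumpf2014}, so no new modification argument is needed.
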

\begin{proof}
  The assertion follows by application of Corollary 5.11 in \cite{Stumpf2014} and subsequent restriction of the statement to a neighborhood of the stationary point $\phi_{0}=0$. In order to be more precisely, recall from \cite{Stumpf2014} that by construction $W_{cu}$ is the subset of a global center-unstable manifold $W^{\eta}$, $\eta>0$, that is contained in some open neighborhood, say $\mathcal{O}$, of $0$ in $U$. Moreover, the manifold $W^{\eta}$ is attractive in the sense of Theorem 4.1 in \cite{Stumpf2014} and this attraction property is formulated by making use of a continuous semiflow $F_{\delta}:[0,\infty)\times X_{\delta}\to X_{\delta}$ on a state space $X_{\delta}\subset C^{1}$ with $0\in X_{\delta}$ and of a continuous map $H^{\eta}_{cu}:X_{\delta}\to W^{\eta}$ with $H^{\eta}_{cu}(0)=0$.

  Next, observe that in each sufficiently small neighborhood of $0$ in $U$, the state space $X_{\delta}$ coincides with $X_{f}$ and each time-$t$-map $F_{\delta}(t,\cdot)$ takes the same values as $F(t,\cdot)$. Let $\mathcal{V}$ denote such a neighborhood of $0$ in $U$. Set $\mathcal{D}:=\mathcal{V}\cap X_{\delta}\subset X_{f}$ and $H:=H^{\eta}_{cu}\vert_{\mathcal{D}}$ . Then $H$ is clearly continuous and satisfies $H(0)=0$. Further, we claim that we also may assume that $H(\mathcal{D})\subset W_{cu}$. Indeed, in other case we could choose $\mathcal{D}\cap H^{-1}(W^{\eta}\cap \mathcal{O})=\mathcal{D}\cap H^{-1}(W_{cu})$ as the new domain of the map $H$. Now, given $\epsilon_{A}>0$, by Corollary 5.11 in \cite{Stumpf2014} we find some $\delta_{A}>0$ such that
  \begin{equation*}
  \|F_{\delta}(t,\phi)-F_{\delta}(t,H_{cu}^{\eta}(\phi))\|_{C^{1}}\leq \epsilon_{A} e^{-\eta t}
  \end{equation*}
  for all $t\geq 0$ and all $\phi\in X_{\delta}$ with $\|\phi\|_{C^{1}}<\delta_{A}$. Consider now any $\phi\in\mathcal{D}$ satisfying $\|\phi\|_{C^{1}}<\delta_{A}$ and suppose that for $0\leq t< \min\lbrace t_{+}(\phi),t_{+}(H(\phi))\rbrace$ we have $F(s,\phi),F(s,H(\phi))\in \mathcal{V}$ as $0\leq s\leq t$. Then $H(\phi)=H^{\eta}_{cu}(\phi)$. Moreover, $F(s,\phi)=F_{\delta}(s,\phi)$ and $F(s,H(\phi))=F_{\delta}(s,H(\phi))=F_{\delta}(s,H^{\eta}_{cu}(\phi))$ for all $0\leq s\leq t$. Hence,
  \begin{equation*}
    \begin{aligned}
      \|F(t,\phi)-F(t,H(\phi))\|_{C^{1}}=\|F_{\delta}(t,\phi)-F_{\delta}(t,H_{cu}^{\eta}(\phi))\|_{C^{1}}\leq
      \epsilon_{A}e^{-\eta t}
    \end{aligned}
  \end{equation*}
  and this proves the assertion with the choice $\eta_{A}=\eta$.
\end{proof}

The proposition above will be essential for the proof of our main result although the last one actually will concern the dynamic of the semiflow $F$ induced on a so-called \textit{local center manifold}. In order to clarify this point in some detail, suppose that, in addition to the hypothesis on $f$, $\dim C_{c}\geq 1$, that is, $C_{c}\not=\lbrace 0\rbrace$, holds. Then the results in Hartung et al. \cite[Theorem 4.1.1]{Hartung2006} and Krisztin \cite{Krisztin2006} show that we find open neighboorhoods $C_{c,0}$ of $0$ in $C_{c}$ and $C_{su,0}^{1}$ of $0$ in $C_{s}^{1}\oplus C_{u}$ with $N_{c}=C_{c,0}+ C_{su,0}^{1}\subset U$, and a $C^{1}$-smooth map $w_{c}:C_{c,0}\to C_{su}^{1}$ with $w_{c}(0)=0$ and $Dw_{c}(0)=0$ such that for the set
     \begin{equation*}
       W_{c}:=\lbrace \phi+w_{c}(\phi)\mid\phi\in C_{c,0}\rbrace,
     \end{equation*}
     which contains $\phi_{0}=0$ and is called a \textit{local center manifold} of $F$, the following holds:
     \begin{itemize}
       \item[(i)] $W_{c}\subset X_{f}$, and $W_{c}$ is a $C^{1}$-submanifold of $X_{f}$ with $\dim W_{c}=\dim C_{c}$.
       \item[(ii)] $W_{c}$ is locally positively invariant with respect to $F$ relative to $N_{c}$.
       \item[(iii)] $W_{c}$ contains the image $\gamma(\R)$ of any globally defined trajectory $\gamma:\R\to X_{f}$ of $F$ with $\gamma(t)\in N_{c}$ for all $t\in \R$.
     \end{itemize}
So, in particular, $W_{c}$ contains the segments of all globally defined and sufficiently small solutions of Eq. \eqref{eq: FDE_linear_nonlinear}, and so of Eq. \eqref{eq: FDE}. In addition, observe that we also may assume that the derivative of the map $w_{c}$ is bounded on its domain. That is a simple consequence of the $C^{1}$-smoothness of $w_{c}$ in combination with the equations $w_{c}(0)=0$ and $Dw_{c}(0)=0$ as shown below.
\begin{cor}\label{cor: bounded}
   There is no restriction of generality in assuming that for the map $w_{c}:C_{c,0}\to C_{su}^{1}$, whose graph defines the local center manifold $W_{c}$,
  \begin{equation*}
  \sup_{\phi\in C_{c,0}}\|Dw_{c}(\phi)\|<\infty
  \end{equation*}
  holds.
\end{cor}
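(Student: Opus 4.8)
The plan is not to construct a new center manifold but simply to shrink the domain of $w_c$: I would replace the neighborhood $C_{c,0}$ of $0$ in $C_c$ by a sufficiently small one on which $Dw_c$ is bounded, and then check that the restriction of $w_c$ to it still has all the defining properties of a local center manifold map. The boundedness comes for free. Indeed, since $w_c$ is $C^1$-smooth, the map $C_{c,0}\ni\phi\mapsto Dw_c(\phi)\in\mathcal{L}(C_c,C_{su}^1)$ is continuous and $Dw_c(0)=0$; as $C_c$ is finite-dimensional, any closed ball $\{\phi\in C_c\mid\|\phi\|_{C^1}\le\rho\}$ with $\rho>0$ small enough to lie in $C_{c,0}$ is compact, so $\|Dw_c\|$ attains a finite maximum there (one may even take $\rho$ so small that this maximum is $\le 1$).

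So I would fix such a $\rho>0$, put $\widetilde{C}_{c,0}:=\{\phi\in C_c\mid\|\phi\|_{C^1}<\rho\}\subset C_{c,0}$, keep $\widetilde{C}_{su,0}^1:=C_{su,0}^1$, and set $\widetilde{N}_c:=\widetilde{C}_{c,0}+\widetilde{C}_{su,0}^1\subset N_c\subset U$ and $\widetilde{w}_c:=w_c|_{\widetilde{C}_{c,0}}$. Then $\widetilde{w}_c$ is $C^1$-smooth, $\widetilde{w}_c(0)=0$, $D\widetilde{w}_c(0)=0$, and $\sup_{\phi\in\widetilde{C}_{c,0}}\|D\widetilde{w}_c(\phi)\|<\infty$ by the previous paragraph. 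It then remains to see that the graph $\widetilde{W}_c:=\{\phi+\widetilde{w}_c(\phi)\mid\phi\in\widetilde{C}_{c,0}\}$ is again a local center manifold, i.e. that properties (i)--(iii) hold with $N_c$, $W_c$ replaced by $\widetilde{N}_c$, $\widetilde{W}_c$. Writing $P_c:C^1\to C_c$ for the continuous projection along $C_{su}^1$, one has $\widetilde{W}_c=W_c\cap P_c^{-1}(\widetilde{C}_{c,0})$, so $\widetilde{W}_c$ is an open subset of the $C^1$-submanifold $W_c$ of $X_f$ containing $0$; thus (i) is immediate with $\dim\widetilde{W}_c=\dim C_c$.

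For (ii) and (iii) I would use that $P_c(\phi+w_c(\phi))=\phi$ for $\phi\in C_{c,0}$, together with the uniqueness of the decomposition $C^1=C_c\oplus C_{su}^1$. If $\chi\in\widetilde{W}_c$ and $t\ge 0$ satisfy $F(s,\chi)\in\widetilde{N}_c$ for $0\le s\le t$, then, since $\widetilde{W}_c\subset W_c$ and $\widetilde{N}_c\subset N_c$, property (ii) for $W_c$ gives $F(s,\chi)\in W_c$, hence $F(s,\chi)=\phi(s)+w_c(\phi(s))$ with $\phi(s)=P_cF(s,\chi)$; but $F(s,\chi)\in\widetilde{N}_c$ forces $\phi(s)=P_cF(s,\chi)\in\widetilde{C}_{c,0}$, and therefore $F(s,\chi)\in\widetilde{W}_c$. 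The argument for (iii) is the same: a trajectory $\gamma:\R\to X_f$ of $F$ with $\gamma(t)\in\widetilde{N}_c\subset N_c$ for all $t$ satisfies $\gamma(\R)\subset W_c$ by (iii) for $W_c$, and then $P_c\gamma(t)\in\widetilde{C}_{c,0}$ gives $\gamma(\R)\subset\widetilde{W}_c$.

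I do not expect a genuine obstacle here; the only step that needs a moment's care is the re-verification of the local positive invariance and of property (iii) after the shrinking, and this reduces to the elementary remark just used: a segment that already lies on $W_c$ and additionally lies in the smaller neighborhood $\widetilde{N}_c$ must have its $C_c$-component inside $\widetilde{C}_{c,0}$, by uniqueness of the splitting $C^1=C_c\oplus C_{su}^1$. Renaming $\widetilde{C}_{c,0},\widetilde{C}_{su,0}^1,\widetilde{N}_c,\widetilde{w}_c,\widetilde{W}_c$ back to $C_{c,0},C_{su,0}^1,N_c,w_c,W_c$ then yields the claim.
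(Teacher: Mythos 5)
Your proposal is correct and follows essentially the same route as the paper: shrink $C_{c,0}$ to a small ball whose closure is compact (by finite-dimensionality of $C_{c}$), so that the continuous map $\phi\mapsto\|Dw_{c}(\phi)\|$ attains a finite maximum there, and then check that the restricted graph still satisfies (i)--(iii). The paper dismisses that last verification as ``straightforward arguments''; your explicit use of the projection along $C_{su}^{1}$ and the uniqueness of the splitting supplies exactly the detail being waved at.
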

\begin{proof}
   Assuming that $\sup_{\phi\in C_{c,0}}\|Dw_{c}(\phi)\|<\infty$ does not hold, below we construct another local center manifold $\tilde{W}_{c}$ with the desired property. For this purpose, choose some $\delta>0$ such that for the open ball $B_{\delta}(0)$ of radius $\delta$ about $0$ in the center space $C_{c}$ we have $\overline{B_{\delta}(0)}\subset C_{c,0}$. Set $\tilde{C}_{c,0}:=B_{\delta}(0)$, $\tilde{N}:=B_{\delta}(0)+C_{su,0}^{1}$ and $\tilde{w}_{c}:=w_{c}\vert_{B_{\delta}}:B_{\delta}(0)\to C_{su,0}^{1}$. Of course, $\tilde{N}\subset U$ and $\tilde{w}_{c}$ is $C^{1}$-smooth and does satisfy $\tilde{w}_{c}(0)=0$ and $D\tilde{w}_{c}(0)=0$. Furthermore, straightforward arguments show that the graph
  \begin{equation*}
    \tilde{W}_{c}:=\lbrace \phi+\tilde{w}_{c}(\phi)\mid \phi\in \tilde{C}_{c,0}\rbrace
  \end{equation*}
  of $\tilde{w}_{c}$ has the properties (i) -- (iii) of a local center manifold, whereas the set $N$ has to be replaced by $\tilde{N}$. Now, the function $\phi\mapsto \|Dw_{c}(\phi)\|$ is clearly continuous. Hence, it takes a maximum $0\leq M<\infty$ on the compact subset $\overline{B_{\delta}(0)}$ of $C_{c}$. It follows that
  \begin{equation*}
    \sup_{\phi\in \tilde{C}_{c,0}}\|D\tilde{w}_{c}(\phi)\|=\sup_{\phi\in B_{\delta}(0)}\|D w_{c}(\phi)\|
    \leq \sup_{\phi\in \overline{B_{\delta}(0)}}\|Dw_{c}(0)\|\leq M<\infty,
  \end{equation*}
  and this finally completes the proof.
\end{proof}

If now the spectral part $\sigma_{u}(G_{e})$ is not empty then, in view of the statements about the dimension, it is clear that the local manifolds $W_{cu}$ and $W_{c}$ differ from each other. But in our main result we will treat the situation where the linearization does not have any unstable direction, that is, where $\sigma_{u}=\emptyset$. And in this case, it may be assumed that both $W_{c}$ and $W_{cu}$ coincide as discussed below.
\begin{prop}
  Suppose that, apart from the smoothness assumptions (S1) and (S2) on $f$, $\sigma_{u}(G_{e})=\emptyset$ but $\sigma_{c}(G_{e})\not=\emptyset$. Then each (sufficiently small) local center-unstable manifold of $F$ at $\phi_{0}=0$ (in the sense of \cite[Theorem 1]{Stumpf2011}) is also a local center manifold of $F$ at $\phi_{0}=0$ (in the sense of \cite[Thorem 4.1.1]{Hartung2006}), and vice versa.
\end{prop}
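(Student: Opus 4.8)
The plan is to exploit that, since $\sigma_{u}(G_{e})=\emptyset$, we have $C_{u}=\lbrace 0\rbrace$, and hence $C_{cu}=C_{c}$ and $C_{su}^{1}=C_{s}^{1}$. Consequently a (sufficiently small) local center-unstable manifold and a (sufficiently small) local center manifold of $F$ at $\phi_{0}=0$ are objects of exactly the same type: each is the graph $W=\lbrace\phi+w(\phi)\mid\phi\in C_{c,0}\rbrace$ of a $C^{1}$-map $w:C_{c,0}\to C_{s,0}^{1}$ defined on an open neighborhood $C_{c,0}$ of $0$ in $C_{c}$, with $w(0)=0$ and $Dw(0)=0$, with $N:=C_{c,0}+C_{s,0}^{1}\subset U$, such that $W$ is a $C^{1}$-submanifold of $X_{f}$ of dimension $\dim C_{c}$ which is positively invariant with respect to $F$ relative to $N$. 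Thus properties (i) and (ii) in the two definitions coincide, and only the respective versions of property (iii) differ: for a center-unstable manifold one demands that $W$ contain the image of every trajectory $\gamma:(-\infty,0]\to X_{f}$ of $F$ with $\gamma(t)\in N$ for all $t\leq 0$, whereas for a center manifold one demands that $W$ contain the image of every globally defined trajectory $\gamma:\R\to X_{f}$ of $F$ with $\gamma(t)\in N$ for all $t\in\R$. So it suffices to show that, for graphs $W$ of the type above, these two conditions are equivalent.

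That the center-unstable condition implies the center condition is immediate: if $W$ contains the image of every backward trajectory that stays in $N$ and $\gamma:\R\to X_{f}$ is a globally defined trajectory with $\gamma(\R)\subset N$, then $\gamma\vert_{(-\infty,0]}$ is such a backward trajectory, so $\gamma((-\infty,0])\subset W$; in particular $\gamma(0)\in W$, and since $\lbrace F(s,\gamma(0))\mid 0\leq s\leq t\rbrace=\gamma([0,t])\subset N$ for every $t\geq 0$, positive invariance of $W$ relative to $N$ gives $\gamma(t)\in W$ for all $t\geq 0$, whence $\gamma(\R)\subset W$. This already shows that every local center-unstable manifold is a local center manifold.

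For the converse I must show that a graph $W$ of the above type which contains the image of every globally defined small trajectory also contains the image $\gamma((-\infty,0])$ of every trajectory $\gamma:(-\infty,0]\to X_{f}$ of $F$ with $\gamma(t)\in N$ for all $t\leq 0$. This cannot be reduced to the center condition by extending $\gamma$ forward to a globally defined trajectory, because that continuation need not remain in $N$ (or even exist for all $t\geq 0$); and, in contrast to an ordinary differential equation, one cannot produce a backward trajectory through a given point of $W$ either, since solutions of Eq. \eqref{eq: FDE} are in general not backward continuable. Instead, write $\gamma(t)=\gamma_{c}(t)+\gamma_{s}(t)$ with $\gamma_{c}(t)\in C_{c}$ and $\gamma_{s}(t)\in C_{s}^{1}$ according to $C^{1}=C_{c}\oplus C_{s}^{1}$; for $\gamma$ small one has $\gamma_{c}(t)\in C_{c,0}$, and the claim is that $\gamma_{s}(t)=w(\gamma_{c}(t))$ for all $t\leq 0$. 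For each $T>0$ the point $p_{-T}:=\gamma_{c}(-T)+w(\gamma_{c}(-T))$ lies on $W$, so by positive invariance $F(s,p_{-T})\in W$ for as long as the orbit stays in $N$; since $p_{-T}$ and $\gamma(-T)$ have the same $C_{c}$-component and differ only in the $C_{s}^{1}$-component, which is transverse to $W$, and since $F(s,\gamma(-T))=\gamma(s-T)\in N$ for $0\leq s\leq T$, the exponential attraction of $W$ — available because $\sigma_{u}(G_{e})=\emptyset$ and of the same type as in Proposition \ref{prop: attraction} — forces $F(s,p_{-T})$ to stay in $N$ on $[0,T]$ and to satisfy $\|F(T,p_{-T})-\gamma(0)\|_{C^{1}}\to 0$ as $T\to\infty$. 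Since $W$ is relatively closed in $N$, this yields $\gamma(0)\in W$, and applying the same reasoning to $\gamma$ shifted in time gives $\gamma(t)\in W$ for every $t\leq 0$. Equivalently, one may set $d(t):=\gamma_{s}(t)-w(\gamma_{c}(t))\in C_{s}^{1}$ and, combining the variation-of-constants formula for Eq. \eqref{eq: FDE_linear_nonlinear}, the invariance identity characterizing $w$, and the exponential dichotomy of the linearized semigroup on $C_{c}$ versus $C_{s}$, derive $\|d(t_{0})\|_{C^{1}}\leq K\,e^{\omega t_{0}}\sup_{t\leq 0}\|d(t)\|_{C^{1}}$ with some $\omega>0$ for all $t_{0}\leq 0$, forcing $d\equiv 0$ as $t_{0}\to-\infty$.

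The main obstacle is precisely this attraction (respectively dichotomy) estimate in the converse direction. In the ordinary differential equations setting it amounts to the familiar ``asymptotic phase'' property of a center manifold in the absence of an unstable part, but for Eq. \eqref{eq: FDE} it must be carried out in terms of segments in $C^{1}$ and with care for the non-invertibility of the semiflow; the required attraction is, however, of exactly the type supplied by \cite{Stumpf2014} and already exploited in Proposition \ref{prop: attraction}. Finally, the hypothesis $\sigma_{c}(G_{e})\neq\emptyset$ guarantees $\dim C_{c}\geq 1$, so that the manifolds in question actually exist and the statement is non-vacuous, while the qualifier ``sufficiently small'' only serves to make the various neighborhoods (the set $N$, the neighborhood on which the attraction estimate is valid, and the domain of $w$) compatible with one another.
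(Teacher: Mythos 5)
Your forward direction (every local center-unstable manifold is a local center manifold) is fine and matches what one would expect: restrict a global trajectory to $(-\infty,0]$, apply property (iii) of $W_{cu}$, and push forward by positive invariance. The converse, however, has a genuine gap. You reduce it to the claim that an arbitrary graph $W$ enjoying only the center-manifold properties (i)--(iii) exponentially attracts, with asymptotic phase, every backward trajectory $\gamma:(-\infty,0]\to X_{f}$ that stays in $N$, and you justify this by appealing to an attraction property ``of exactly the type supplied by \cite{Stumpf2014} and already exploited in Proposition \ref{prop: attraction}.'' But Proposition \ref{prop: attraction} is a statement about the \emph{specific} manifold $W_{cu}$ produced by the Lyapunov--Perron construction in \cite{Stumpf2011, Stumpf2014}; it compares $F(t,\phi)$ with $F(t,H(\phi))$ for a particular map $H$ into that manifold, and it says nothing about an arbitrary graph satisfying (i)--(iii). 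Nothing in your argument supplies the estimate $\|F(T,p_{-T})-\gamma(0)\|_{C^{1}}\to 0$, nor the claim that $F(s,p_{-T})$ remains in $N$ on $[0,T]$ (note that $p_{-T}$ and $\gamma(-T)$ differ in their $C_{s}^{1}$-components, so there is no a priori control on the forward orbit of $p_{-T}$ before attraction has been established). The alternative dichotomy estimate $\|d(t_{0})\|_{C^{1}}\leq K e^{\omega t_{0}}\sup_{t\leq 0}\|d(t)\|_{C^{1}}$ is likewise asserted rather than derived, and deriving it is essentially the whole content of the hard direction; in the state-dependent delay setting it is exactly the kind of estimate that \cite{Stumpf2014} devotes substantial work to, and only for the constructed $W_{cu}$.

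The paper avoids this entirely by reading the proposition literally: ``in the sense of \cite[Theorem 1]{Stumpf2011}'' and ``in the sense of \cite[Theorem 4.1.1]{Hartung2006}'' refer to the objects produced by the respective Lyapunov--Perron constructions. Both are obtained as graphs over balls in $C_{cu}$, respectively $C_{c}$, via fixed points of parameter-dependent contractions $\mathcal{G}^{\eta,\delta}_{cu}$ on $C^{1}_{\eta,(-\infty,0]}$ and $\mathcal{G}^{\eta,\delta}_{c}$ on $C^{1}_{\eta,\R}$. Under $\sigma_{u}(G_{e})=\emptyset$ one has $C_{cu}=C_{c}$, and a direct comparison of the two contractions shows that their fixed points agree on $(-\infty,0]$, hence agree at $t=0$, so the two families of graphs are literally the same sets. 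This sidesteps both the asymptotic-phase argument and the delicate question of whether properties (i)--(iii) characterize these manifolds uniquely enough for your property-by-property equivalence to even be the right statement to prove. If you want to salvage your route, you would need to prove the attraction-with-asymptotic-phase property for an arbitrary graph satisfying the center-manifold axioms, which is a substantially harder (and here unnecessary) task.
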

\begin{proof}
  For each $\eta>0$ let $C^{1}_{\eta,\R}$ and $C^{1}_{\eta,(-\infty,0]}$ denote the Banach spaces
  \begin{equation*}
  C^{1}_{\eta,\R}:=\lbrace u\in C(\R,C^{1})\mid \sup_{t\in\R}e^{-\eta |t|}\|u(t)\|_{\R^{n}}<\infty\rbrace
  \end{equation*}
  and
  \begin{equation*}
  C^{1}_{\eta,(-\infty,0]}:=\lbrace u\in C((-\infty,0],C^{1})\mid \sup_{t\leq 0}e^{\eta t}\|u(t)\|_{\R}<\infty\rbrace,
  \end{equation*}
  respectively. Then recall that in the main the construction of local center manifolds in \cite{Hartung2006} runs as follows. After fixing appropriate real $\eta>0$ and small enough $\delta>0$, one considers a specific parameter-dependent contraction $\mathcal{G}^{\eta,\delta}_{c}:C^{1}_{\eta,\R}\times C_{c}\to C^{1}_{\eta,\R}$ with $\mathcal{G}^{\eta,\delta}_{c}(0,0)=0$ such that for each $\phi\in C_{c}$ the equation
  $u=\mathcal{G}^{\eta,\delta}_{c}(u,\phi)$
  has a uniquely determined solution $u(\phi)\in C^{1}_{\eta,\R}$. This leads to a continuous map $u^{\eta,\delta}_{c}:C_{c}\ni \phi\mapsto u(\phi)\in C_{\eta}^{1}$, and a local center manifold of $F$ at $\phi_{0}=0$ is then defined as that subset of $W^{\eta,\delta}_{c}:=\lbrace u^{\eta,\delta}_{c}(\phi)(0)\mid \phi\in C_{c}\rbrace$ where the parameters $\phi\in C_{c}$ are contained in the open ball of radius $\delta$ about $0$ in $C_{c}$.

  Similarly, the local center-unstable manifolds in \cite{Stumpf2011} are constructed by conside\-ring, for the same $\eta>0$ as in the case of local center manifolds and sufficiently small $\delta>0$, a parameter-dependent contraction $\mathcal{G}^{\eta,\delta}_{cu}:C^{1}_{\eta,(-\infty,0]}\times C_{cu}\to C^{1}_{\eta,(\infty,0]}$ satisfying $\mathcal{G}^{\eta,\delta}_{cu}(0,0)=0$ and leading for each fixed $\phi\in C_{cu}$ to a uniquely determined solution $u(\phi)$ of the equation $u=\mathcal{G}^{\eta,\delta}_{cu}(u,\phi)$. This results in a continuous mapping $u^{\eta,\delta}_{cu}:C_{cu}\ni\phi\mapsto u(\phi)\in C^{1}_{\eta,(-\infty,0]}$, and the restriction of $W^{\eta,\delta}_{cu}:=\lbrace u^{\eta}_{cu}(\phi)(0)\mid \phi\in C_{cu}\rbrace$ to parameters $\phi\in C_{cu}$ in the open ball of radius $\delta$ about $0$ in $C_{cu}$ defines a local center-unstable manifold of $F$ at $\phi_{0}=0$.

  Now, observe that by assumptions we have $C_{u}=\lbrace 0\rbrace\subset C^{1}$ and so $C_{c}=C_{cu}$. Therefore, a careful comparison of the definition of $\mathcal{G}^{\eta,\delta}_{c}$ in \cite{Hartung2006} and the one of $\mathcal{G}^{\eta,\delta}_{cu}$ in \cite{Stumpf2011} leads to the conclusion that for all sufficiently small $\delta>0$ we have
  \begin{equation*}
    u^{\eta,\delta}_{c}(\phi)(t)=u^{\eta,\delta}_{cu}(\phi)(t)
  \end{equation*}
  for all $\phi\in C_{c}=C_{cu}$ and all $t\leq 0$. In particular,  $u^{\eta,\delta}_{c}(\phi)(0)=u^{\eta,\delta}_{cu}(\phi)(0)$ for each $\phi\in C_{c}$. It follows that $W^{\eta,\delta}_{c}=W^{\eta,\delta}_{cu}$, which implies the assertion.
\end{proof}

From now on and until the end of the next section, we assume that the assumptions of the last result hold and we set $W_{c}=W_{cu}$, $w_{c}=w_{cu}$, and $d:=\dim W_{c}>0$. Further, let $P_{c}:C^{1}\to C_{c}$ denote the continuous projection of $C^{1}$ along $C_{s}^{1}$ onto $C_{c}=C_{cu}$.

Our next goal is to derive an ordinary differential equation describing the dyna\-mics on $W_{c}$ induced by solutions of Eq. \eqref{eq: FDE_linear_nonlinear}. For this purpose, choose a basis $\lbrace \phi_{1},\dots,\phi_{d}\rbrace\subset C^{1}$ of the center space $C_{c}$ and introduce the row vector
\begin{equation*}
  \varPhi_{c}:=(\phi_{1},\dots,\phi_{d}).
\end{equation*}
Then each $\phi\in C_{c}$ has clearly a uniquely determined representation as
\begin{equation*}
\phi=\varPhi_{c}\,c(\phi)=\sum_{j=1}^{d}\phi_{j}\,c_{j}(\phi)
\end{equation*}
with a column vector $c(\phi):=(c_{1}(\phi),\cdots,c_{d}(\phi))^{T}\in\R^{d}$. Thus, using the notation $\varGamma_{c}:C_{c}\to\R^{d}$ for the bounded linear map assigning each $\phi\in C_{c}$ the coefficient vector $c(\phi)\in\R^{d}$, we get $\phi=\varPhi_{c}\,\varGamma_{c}(\phi)$ for all $\phi\in C_{c}$.

Next, observe that, in consideration of the invariance of $C_{c}$ under $G_{e}$, we find some matrix $B_{c}\in\R^{d\times d}$ such that
\begin{equation*}
  G_{e}\,\varPhi_{c}=\varPhi_{c}\,B_{c}
\end{equation*}
with the row vector $G_{e}\,\varPhi_{c}:=(G_{e}\phi_{1},\dots,G_{e}\phi_{d})$. The eigenvalues of the matrix $B_{c}$ coincide with $\sigma_{c}(G_{e})$, that is, $B_{c}$ has the same eigenvalues as the restriction of $G_{e}$ to $C_{c}$.

Now, as discussed in \cite[Chapter 2.6]{Stumpf2010}, we find an open neighborhood $V\subset\R^{d}$ of $0\in\R^{d}$ and a continuously differentiable function $h:V\to\R^{d}$ with $h(0)=0$ and $Dh(0)=0$ such that the center manifold reduction of $F$ to $W_{c}$ reads
\begin{equation}\label{eq: CM-reduction}
  z^{\prime}(t)=B_{c}\,z(t)+h(z(t)).
\end{equation}
In other words, there are $V$ and $h$ as described above such that
on the one hand, given any solution $x:I+[-h,0]\to\R^{n}$, $I\subset\R$ an interval, of Eq. \eqref{eq: FDE_linear_nonlinear} with $x_{t}\in W_{c}$ for all $t\in I$, the function $z:I\ni t\mapsto \varGamma_{c}\, P_{c}\, x_{t}\in\R^{d}$ forms a solution of Eq. \eqref{eq: CM-reduction}. And on the other hand, for any solution $z:I\to\R^{d}$, $I\subset\R$ an interval, of Eq. \eqref{eq: CM-reduction} we find a solution $x:I+[-h,0]\to\R^{n}$ of Eq. \eqref{eq: FDE_linear_nonlinear} with $x_{t}=\varPhi_{c}\,z(t)+w_{c}(\varPhi_{c}\,z(t))$ for all $t\in I$.
\section{The reduction principle and its proof}
After all the preparatory work we are now in the position to state our main result.
\begin{thm}\label{thm: reduction}
  Let $f$ be as in Theorem \ref{thm: linearized} and suppose that $\sigma_{u}(G_{e})=\emptyset$ but $\sigma_{c}(G_{e})\not=\emptyset$.
  If $z:\R\ni t\mapsto 0\in\R^{d}$ is unstable / stable / locally asymptotically stable as a solution of Eq. \eqref{eq: CM-reduction}, then $\phi_{0}=0$ is unstable / stable / locally asymptotically stable as a stationary point of the semiflow $F$.
\end{thm}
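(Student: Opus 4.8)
The plan is to follow the structure of Vanderbauwhede's proof of the Pliss reduction principle \cite{Vanderbauwhede1989}, transplanting it to the semiflow $F$ and using Proposition \ref{prop: attraction} as the crucial substitute for the backward-continuation arguments available in the ODE case. We treat the three implications separately, but the core of the argument is the stability and asymptotic-stability part; instability is comparatively soft. Throughout, we exploit that under our hypotheses $W_{c}=W_{cu}$, so the local center manifold is simultaneously attractive in the sense of Proposition \ref{prop: attraction}, and that via $z(t)=\varGamma_{c}P_{c}x_{t}$ and $x_{t}=\varPhi_{c}z(t)+w_{c}(\varPhi_{c}z(t))$ the dynamics on $W_{c}$ and the dynamics of Eq. \eqref{eq: CM-reduction} are conjugate; since $\varPhi_{c}$, $\varGamma_{c}$, $P_{c}$ and $w_{c}$ (the latter with bounded derivative, by Corollary \ref{cor: bounded}) are Lipschitz near $0$, stability notions transfer between the two settings without loss.

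\emph{Stability.} Suppose $z=0$ is stable for Eq. \eqref{eq: CM-reduction}; we must show $\phi_{0}=0$ is stable for $F$. Let $\epsilon>0$ be given. First shrink $\epsilon$ if necessary so that the $\epsilon$-ball around $0$ in $X_{f}$ lies in the common domain $\mathcal{V}\cap\mathcal{D}$ of Proposition \ref{prop: attraction} and inside the neighborhoods $N_{c}$, $\mathcal{O}$ on which the center-manifold reduction and the attraction estimate are valid. Now take any small initial value $\phi\in X_{f}$. The map $H$ from Proposition \ref{prop: attraction} sends $\phi$ to a point $H(\phi)\in W_{c}$ with $\|H(\phi)\|_{C^{1}}$ small (by continuity of $H$ and $H(0)=0$); the trajectory through $H(\phi)$ stays on $W_{c}$ (local positive invariance, property (ii)) as long as it remains in $N_{c}$, hence corresponds to a solution of Eq. \eqref{eq: CM-reduction} which, by stability of $z=0$ and the Lipschitz conjugacy above, stays within $\epsilon/2$ of $0$ and in particular never leaves $\mathcal{V}$; so $F(t,H(\phi))$ is globally defined and $\epsilon/2$-small for all $t\geq0$. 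Apply Proposition \ref{prop: attraction} with $\epsilon_{A}=\epsilon/2$: for $\|\phi\|_{C^{1}}\leq\delta_{A}$ and as long as both trajectories remain in $\mathcal{V}$, we have $\|F(t,\phi)-F(t,H(\phi))\|_{C^{1}}\leq(\epsilon/2)e^{-\eta_{A}t}\leq\epsilon/2$, whence $\|F(t,\phi)\|_{C^{1}}<\epsilon$. A standard continuity/bootstrap argument (the set of $t$ for which $F(s,\phi)\in\mathcal{V}$ for all $s\leq t$ is relatively open and closed in $[0,t_{+}(\phi))$, and nonempty) then shows that in fact $F(s,\phi)$ never leaves $\mathcal{V}$, so $t_{+}(\phi)=\infty$ and the bound holds for all $t\geq0$. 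Choosing $\delta(\epsilon)$ to be the minimum of $\delta_{A}$ and of the radius ensuring $\|H(\phi)\|_{C^{1}}$ is small enough completes the stability proof.

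\emph{Asymptotic stability.} If in addition $z=0$ is attractive, then in the argument above $F(t,H(\phi))\to0$ as $t\to\infty$ (transfer attraction through the conjugacy), while $\|F(t,\phi)-F(t,H(\phi))\|_{C^{1}}\leq(\epsilon/2)e^{-\eta_{A}t}\to0$; combining, $F(t,\phi)\to0$. Since $t_{+}(\phi)=\infty$ was already established, $\phi_{0}=0$ is locally asymptotically stable.

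\emph{Instability.} Suppose $z=0$ is unstable for Eq. \eqref{eq: CM-reduction}. Then there are solutions of Eq. \eqref{eq: CM-reduction} starting arbitrarily close to $0$ that leave a fixed ball; pulling these back through $x_{t}=\varPhi_{c}z(t)+w_{c}(\varPhi_{c}z(t))$ produces solutions of Eq. \eqref{eq: FDE_linear_nonlinear} with segments on $W_{c}\subset X_{f}$ starting arbitrarily close to $\phi_{0}=0$ whose trajectories leave a fixed neighborhood of $0$ in $X_{f}$ (here we use the local positive invariance of $W_{c}$ relative to $N_{c}$, together with the observation that a trajectory leaving a small ball must first leave $N_{c}$, which is itself a neighborhood of $0$); this contradicts stability of $\phi_{0}$, so $\phi_{0}$ is unstable.

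\emph{Main obstacle.} The delicate point is the stability half, specifically the interplay of the three neighborhoods $\mathcal{V}$, $\mathcal{D}$, $N_{c}$ and the fact that Proposition \ref{prop: attraction} only yields its estimate \emph{as long as} both trajectories stay in $\mathcal{V}$: one has to run a careful bootstrap to upgrade this conditional estimate to a global-in-time one, and simultaneously to guarantee $t_{+}(\phi)=\infty$, all without any backward-continuation of $\phi$ itself (which is unavailable for Eq. \eqref{eq: FDE}). This is exactly where the proof departs from the classical ODE argument, and the quantitative matching of the various smallness constants — the radius making $H(\phi)$ small, $\delta_{A}$, and the radius keeping the $W_{c}$-trajectory inside $N_{c}$ — is the part that needs to be done with care.
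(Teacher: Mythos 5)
Your overall strategy coincides with the paper's: split into the three cases, use the conjugacy $z(t)=\varGamma_{c}P_{c}x_{t}$, $x_{t}=\varPhi_{c}z(t)+w_{c}(\varPhi_{c}z(t))$ together with the bounded derivative of $w_{c}$ (Corollary \ref{cor: bounded}) to transfer smallness between $W_{c}$ and $\R^{d}$, apply Proposition \ref{prop: attraction} to the pair $(\phi,H(\phi))$, and run a continuity bootstrap to show the trajectory of $\phi$ never leaves $\mathcal{V}$. The instability part, phrased by you directly and in the paper as the contrapositive (stability of $\phi_{0}$ implies stability of $z=0$), is the same argument. One presentational caveat in the stability step: to know that $F(\cdot,H(\phi))$ stays on $W_{c}$ for all $t\geq 0$ you should argue in the direction the paper does, namely start from the globally existing solution of Eq.~\eqref{eq: CM-reduction} with initial value $\varGamma_{c}P_{c}H(\phi)$, lift it to a solution of Eq.~\eqref{eq: FDE_linear_nonlinear} whose segments lie in $W_{c}$, and identify it with $F(\cdot,H(\phi))$ by uniqueness; your formulation (``stays on $W_{c}$ as long as it remains in $N_{c}$, hence corresponds to a solution of the reduced equation which by stability stays small'') is circular as written, though easily repaired using the conjugacy in the other direction, which you do state.

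There is, however, one genuine gap: the inference ``$F(s,\phi)$ never leaves $\mathcal{V}$, so $t_{+}(\phi)=\infty$.'' For an ODE, a solution that stays in a compact subset of the domain extends globally, but for Eq.~\eqref{eq: FDE} boundedness of $\|F(t,\phi)\|_{C^{1}}$ on $[0,t_{+}(\phi))$ does not by itself preclude finite-time breakdown of continuation: the continuation criterion for this class of equations requires (pre)compactness of the orbit $\lbrace x^{\phi}_{t}\mid 0\leq t<t_{+}(\phi)\rbrace$ in $C^{1}$, not merely boundedness, since closed bounded sets in $C^{1}$ are not compact. The paper closes exactly this hole with a separate auxiliary result (Proposition \ref{prop: global solutions}): using the local boundedness of $D_{e}f$ near $0$ one shows that a solution bounded by a suitable $b>0$ has equicontinuous families of segments and segment derivatives, so Arzel\`{a}--Ascoli gives compact orbit closure and the continuation criterion of Diekmann et al.\ yields $t_{+}(\phi)=\infty$. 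You flag ``guaranteeing $t_{+}(\phi)=\infty$'' as a delicate point in your closing paragraph but supply no mechanism for it; without an argument of this type the stability (and hence asymptotic stability) half of your proof is incomplete.
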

\begin{remark}
  As already mentioned in the introduction, the above result is completely similar to the so-called \textit{Pliss reduction principle} from the theory of ordinary differential equations. For more details, we refer the reader to Pliss \cite{Pliss1964} and Vanderbauwhede \cite{Vanderbauwhede1989}.
\end{remark}
The statement of Theorem \ref{thm: linearized} consists of three parts and we show them in a series of propositions. But before doing so, we prove the following auxiliary result which is of similar type as Theorem 1.6 in Getto \& Waurick \cite{Getto2014}:

\begin{prop}\label{prop: global solutions}
  Given $f$ as in Theorem \ref{thm: linearized}, there is some $b>0$ such that if $\phi\in X_{f}$ and if the associated solution $x^{\phi}:[-h,t_{+}(\phi))\to\R^{n}$ of Eq. \eqref{eq: FDE} is bounded by $b$, that is, $\|x^{\phi}_{t}\|_{C^{1}}\leq b$ for all $0\leq t<t_{+}(\phi)$, then $t_{+}(\phi)=\infty$.
\end{prop}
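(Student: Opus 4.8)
The plan is to argue by contradiction: fix a small $b>0$ (to be constrained below) and suppose there is some $\phi\in X_{f}$ with $\|x^{\phi}_{t}\|_{C^{1}}\leq b$ for all $0\leq t<t_{+}(\phi)$ but, contrary to the assertion, $t_{+}(\phi)<\infty$. Abbreviate $x:=x^{\phi}$. The decisive point will be that, under these hypotheses, the segments $x_{t}$ converge in $C^{1}$ as $t\uparrow t_{+}(\phi)$; once this is shown, the limit turns out to lie in $X_{f}$, and restarting the semiflow at that limit produces a solution through $\phi$ on an interval strictly larger than $[-h,t_{+}(\phi))$, contradicting the non-continuability of $x^{\phi}$ from Walther~\cite{Walther2003}. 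Throughout, $b$ is taken small enough that the closed ball $\overline{B}$ of radius $b$ about $0$ in $C^{1}$ satisfies $\overline{B}\subset U$.

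First I would record two Lipschitz estimates. Since $x_{t}\in X_{f}$ for every $t\in[0,t_{+}(\phi))$, we have $x^{\prime}(t)=f(x_{t})=x_{t}^{\prime}(0)$, whence $|x^{\prime}(t)|\leq\|x_{t}\|_{C^{1}}\leq b$; together with $|x^{\prime}(t)|=|\phi^{\prime}(t)|\leq\|\phi\|_{C^{1}}\leq b$ on $[-h,0]$ this gives $|x(t)-x(s)|\leq b|t-s|$, and in particular $\|x_{t}-x_{s}\|_{C}\leq b|t-s|$ for all admissible $s,t$. Secondly, I would extract from (S2) a Lipschitz bound for $f$ in the \emph{weaker} norm of $C$: joint continuity of $(\psi,\chi)\mapsto D_{e}f(\psi)\chi$ at $(0,0)\in U\times C$ together with linearity in $\chi$ yields a constant $N>0$ with $\|D_{e}f(\psi)\|_{\mathcal{L}(C,\R^{n})}\leq N$ for all $\psi\in\overline{B}$ (after shrinking $b$ once more). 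Using that $\overline{B}$ is convex, that $f$ is $C^{1}$ by (S1), and that $D_{e}f$ extends $Df$, the identity $f(\sigma_{1})-f(\sigma_{2})=\int_{0}^{1}D_{e}f\bigl(\sigma_{2}+r(\sigma_{1}-\sigma_{2})\bigr)(\sigma_{1}-\sigma_{2})\,dr$ then gives $|f(\sigma_{1})-f(\sigma_{2})|\leq N\|\sigma_{1}-\sigma_{2}\|_{C}$ for all $\sigma_{1},\sigma_{2}\in\overline{B}$. Combining the two estimates, $|x^{\prime}(t)-x^{\prime}(s)|=|f(x_{t})-f(x_{s})|\leq N\|x_{t}-x_{s}\|_{C}\leq Nb\,|t-s|$, so $x^{\prime}$ is Lipschitz on $[-h,t_{+}(\phi))$. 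Hence both $x$ and $x^{\prime}$ are uniformly continuous there and extend continuously to $[-h,t_{+}(\phi)]$, yielding some $\bar{x}\in C^{1}\bigl([-h,t_{+}(\phi)],\R^{n}\bigr)$ with $x_{t}\to\bar{x}_{t_{+}(\phi)}$ in $C^{1}$ as $t\uparrow t_{+}(\phi)$.

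It then remains to identify the limit and to glue. Because $\|\bar{x}_{t_{+}(\phi)}\|_{C^{1}}\leq b$, we have $\bar{x}_{t_{+}(\phi)}\in\overline{B}\subset U$; and by continuity of $f$ (from (S1)), $\bar{x}^{\prime}(t_{+}(\phi))=\lim_{t\uparrow t_{+}(\phi)}x^{\prime}(t)=\lim_{t\uparrow t_{+}(\phi)}f(x_{t})=f(\bar{x}_{t_{+}(\phi)})$, which is exactly the defining relation of $X_{f}$; thus $\bar{x}_{t_{+}(\phi)}\in X_{f}$. Walther's results then provide a solution $y:=x^{\bar{x}_{t_{+}(\phi)}}$ of Eq.~\eqref{eq: FDE} on $[-h,t_{+}(\bar{x}_{t_{+}(\phi)}))$ with $t_{+}(\bar{x}_{t_{+}(\phi)})>0$, and concatenating $\bar{x}$ on $[-h,t_{+}(\phi)]$ with $t\mapsto y(t-t_{+}(\phi))$ produces a function whose one-sided derivatives at $t_{+}(\phi)$ both equal $f(\bar{x}_{t_{+}(\phi)})$; it is therefore $C^{1}$, satisfies Eq.~\eqref{eq: FDE}, and for some $\epsilon>0$ has all its segments at times in $[0,t_{+}(\phi)+\epsilon)$ contained in $U$ (as the segment map of the concatenation is continuous into $C^{1}$ and $U$ is open). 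By uniqueness this extends $x^{\phi}$ beyond $t_{+}(\phi)$, contradicting non-continuability; hence $t_{+}(\phi)=\infty$.

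I expect the genuine obstacle to be the uniform continuity of $x^{\prime}$ near $t_{+}(\phi)$, equivalently the passage from the $C$-Lipschitz continuity of $x$ to the $C^{1}$-Lipschitz continuity of the segments $t\mapsto x_{t}$. A naive appeal to the mere continuity of $f$ on $C^{1}$ is insufficient, because bounded subsets of $C^{1}$ are not relatively compact, and it is precisely assumption (S2) — the characteristic feature of the state-dependent delay setting — that supplies the required Lipschitz estimate in the weaker norm of $C$. The remaining steps (the Lipschitz bound on $x$ itself, the identification $\bar{x}_{t_{+}(\phi)}\in X_{f}$, and the concatenation) are then routine.
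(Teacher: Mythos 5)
Your proposal is correct, and it reaches the conclusion by a genuinely different route than the paper. The paper fixes $b$ so that the closed ball of radius $b$ lies in a neighborhood where $D_{e}f$ is bounded (quoting Corollary 1 of Walther \cite{Walther2004}) and where $t_{+}>h$, and then reduces global existence to precompactness of the orbit closure via Proposition 2.2 in Diekmann et al.\ \cite[Ch.~VII.2]{Diekmann1995}; the precompactness is verified by Arzel\`a--Ascoli, i.e.\ by showing that the segments and their derivatives are bounded and equicontinuous. You instead use the very same two a priori estimates --- the bound $\|x_{t}-x_{s}\|_{C}\leq b|t-s|$ and the local $C$-norm Lipschitz bound on $f$ obtained from the boundedness of $\psi\mapsto D_{e}f(\psi)$ near $0$ (which you correctly re-derive from the joint continuity in (S2) rather than citing Walther) --- to conclude that $x^{\prime}$ is Lipschitz on $[0,t_{+}(\phi))$, so that the segments converge in $C^{1}$ as $t\uparrow t_{+}(\phi)$, and you then carry out the continuation explicitly. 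Indeed, your chain $|x^{\prime}(t)-x^{\prime}(s)|=|f(x_{t})-f(x_{s})|\leq N\|x_{t}-x_{s}\|_{C}\leq Nb|t-s|$ is exactly the computation the paper performs to prove equicontinuity of $\mathcal{O}^{\prime}$ for $t\geq h$. What your version buys is self-containedness (no appeal to the compactness-based continuation criterion of \cite{Diekmann1995}, which is formulated for retarded FDEs on $C$ and requires some care to transplant to the solution-manifold setting) and the slightly stronger conclusion that $x_{t}$ actually converges as $t\uparrow t_{+}(\phi)$; the paper's version is shorter given the citation. One small imprecision: $x^{\prime}$ is Lipschitz only on $[0,t_{+}(\phi))$, since on $[-h,0]$ it equals $\phi^{\prime}$, which is merely continuous; this is harmless because $\phi^{\prime}$ is uniformly continuous on the compact interval $[-h,0]$, which is all your limiting argument needs (the paper's proof handles the same issue by treating $0\leq t\leq h$ separately).
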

\begin{proof}
   To begin with, recall from Corollary 1 in Walther \cite{Walther2004} that there is some open neighborhood $U_{B}$ about $0$ in $U$ on which the map $D_{e}f:U\to\mathcal{L}(C,\R^{n})$ is bounded. Next, note that by the continuity of the semiflow $F$ we also find some open neighborhood $U_{h}$ of $0$ in $U$ such that for all $\phi\in U_{h}\cap X_{f}$ we have $t_{+}(\phi)>h$. Fix now any $b>0$ with $\lbrace \phi\in U|\|\phi\|_{C^{1}}\leq b\rbrace\subset U_{B}\cap U_{h}$, and suppose that for given $\psi\in X_{f}$ we have $\|x_{t}^{\psi}\|_{C^{1}}=\|F(t,\psi)\|_{C^{1}}\leq b$ as $0\leq t<t_{+}(\psi)$. Then the assertion follows from Proposition 2.2 in Diekmann et al. \cite[Chapter VII.2]{Diekmann1995} provided the closure of $\lbrace x^{\psi}_{t}\mid 0\leq t<t_{+}(\psi)\rbrace\subset X_{f}$ is compact. Hence, in view of the Arzel\a'a-Ascoli Theorem, it suffices only to prove that both sets
  \begin{equation*}
    \mathcal{O}:=\lbrace x^{\psi}_{t}\mid 0\leq t<t_{+}(\psi)\rbrace\quad\text{and}\quad
    \mathcal{O}^{\prime}:=\lbrace (x^{\psi}_{t})^{\prime}\mid 0\leq t<t_{+}(\psi)\rbrace
  \end{equation*}
  are bounded with respect to the norm of $C$ and equicontinuous. This is done in the following.

  First, recall that by definition $\|x_{t}^{\psi}\|_{C^{1}}=\|x_{t}^{\psi}\|_{C}+\|(x_{t}^{\psi})^{\prime}\|_{C}$ as $0\leq t<t_{+}(\psi)$. Therefore, the assumption $\|x_{t}^{\psi}\|_{C^{1}}\leq b$ for all $0\leq t<t_{+}(\psi)$ clearly implies the boundedness of both $\mathcal{O}$ and $\mathcal{O}^{\prime}$ with respect the $\|\cdot\|_{C}$-norm. Furthermore,
  \begin{equation*}
    x^{\psi}_{t}(s)-x^{\psi}_{t}(u)=
    x^{\psi}(t+s)-x^{\psi}(t+u)
  =\int_{t+u}^{t+s}(x^{\psi})^{\prime}(v)\,dv
  \end{equation*}
  and therefore
  \begin{equation}\label{eq: estimate3}
\|x^{\psi}_{t}(s)-x^{\psi}_{t}(u)\|_{\R^{n}}  \leq \max_{-h\leq v<t_{+}(\psi)}\|(x^{\psi})^{\prime}(v)\|_{\R^{n}}|s-u|\leq b|s-u|
  \end{equation}
  for all $0\leq t<t_{+}(\psi)$ and all $s,u\in[-h,0]$. Thus, $\mathcal{O}$ is also equicontinous.

  The only point remaining concerns the equicontinuity of $\mathcal{O}^{\prime}$. In order to see this, let $\epsilon>0$ be given. Recall from the beginning of the proof that we have $t_{+}(\psi)>h$. As $(x^{\psi})^{\prime}$ is continuous, its restriction to the interval $[-h,h]$ is uniformly continuous. For this reason, there is some $\delta_{1}>0$ such that for all $u,s\in[-h,h]$ the implication
  \begin{equation*}
  |s-u|<\delta_{1}\qquad\Longrightarrow\qquad \|(x^{\psi})^{\prime}(s)-(x^{\psi})^{\prime}(u)\|_{\R^{n}}<\epsilon
  \end{equation*}
  holds. In particular, given $0\leq t\leq h$ and reals $s,u\in[-h,0]$ with $|s-u|<\delta_{1}$, we have $\|(x^{\psi}_{t})^{\prime}(s)-(x^{\psi}_{t})^{\prime}(u)\|_{\R^{n}}<\epsilon$.

 Next, consider any fixed $t\geq h$. 
 Then, for all $s,u\in[-h,0]$, we get
\begin{equation*}
  \begin{aligned}
    \|(x^{\psi}_{t})^{\prime}(s)-(x^{\psi}_{t})^{\prime}(u)\|_{\R^{n}}&=
    \|(x^{\psi})^{\prime}(t+s)-(x^{\psi})^{\prime}(t+u)\|_{\R^{n}}\\
    &=\|f(F(t+s,\psi))-f(F(t+u,\psi))\|_{\R^{n}}\\
    &=\left\|\int_{0}^{1}Df(x_{t+u}^{\psi}+v(x_{t+s}^{\psi}-x_{t+u}^{\psi}))\,\left[x_{t+s}^{\psi}-
    x_{t+u}^{\psi}\right]\,dv\right\|_{\R^{n}}\\
    &=\left\|\int_{0}^{1}D_{e}f(x_{t+u}^{\psi}+v(x_{t+s}^{\psi}-x_{t+u}^{\psi}))\,\left[x_{t+s}^{\psi}-
    x_{t+u}^{\psi}\right]\,dv\right\|_{\R^{n}}\\
    &\leq \int_{0}^{1}\left\|D_{e}f(x_{t+u}^{\psi}+v(x_{t+s}^{\psi}-x_{t+u}^{\psi}))\,\left[x_{t+s}^{\psi}-
    x_{t+u}^{\psi}\right]\right\|_{\R^{n}}\,dv
    \end{aligned}
    \end{equation*}
and so, in view of the boundedness of $\phi\mapsto D_{e}f(\phi)$ on $U_{B}$ and estimate \eqref{eq: estimate3},
    \begin{equation*}
    \begin{aligned}
   \|(x^{\psi}_{t})^{\prime}(s)-(x^{\psi}_{t})^{\prime}(u)\|_{\R^{n}}&\leq \int_{0}^{1}\sup_{\phi\in U_{B}}\|D_{e}f(\phi)\|\|x^{\psi}_{t+s}-x^{\psi}_{t+u}\|_{C}\,dv\\
    &=\sup_{\phi\in U_{B}}\|D_{e}f(\phi)\|\max_{-h\leq v\leq 0}\|x^{\psi}_{t}(s+v)-
    x_{t}^{\phi}(u+v)\|_{\R^{n}}\\
    &\leq \sup_{\phi\in U_{B}}\|D_{e}f(\psi)\|\cdot b\cdot |s-u|.
  \end{aligned}
\end{equation*}
In particular, it follows that $\|(x^{\psi}_{t})^{\prime}(s)-(x^{\psi}_{t})^{\prime}(u)\|_{\R^{n}}<\epsilon$ provided $s,u\in[-h,0]$ satisfy $|u-s|<\delta_{2}$ with
\begin{equation*}
  \delta_{2}:=\frac{\epsilon}{\sup_{\phi\in U_{B}}\|D_{e}f(\psi)\|b}.
\end{equation*}

Now, choosing $0<\delta<\min\lbrace \delta_{1},\delta_{2}\rbrace$, we see that for all $t\geq 0$ and all $s,u\in[-h,0]$ with $|s-u|<\delta$ we have $\|(x^{\psi}_{t})^{\prime}(s)-(x^{\psi}_{t})^{\prime}(u)\|_{\R^{n}}<\epsilon$. This shows the equicontinuity of $\mathcal{O}^{\prime}$ which finally finishes the proof of the proposition.
\end{proof}

Now, we return to Theorem \ref{thm: reduction} and prove the assertion that the stability of the zero solution of the reduced differential equation implies the stability of $\phi_{0}$.
\begin{prop}\label{prop: stability}
  Consider $f$ and $\sigma(G_{e})$ as in Theorem \ref{thm: reduction} and suppose that $z:\R\ni t\mapsto 0\in\R^{d}$ is stable as a solution of Eq. \eqref{eq: CM-reduction}. Then $\phi_{0}=0\in X_{f}$ is a stable stationary point of the semiflow $F$.
\end{prop}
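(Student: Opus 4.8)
The plan is to follow the proof of the Pliss reduction principle in Vanderbauwhede \cite{Vanderbauwhede1989}, now feeding in as inputs the attraction estimate of Proposition \ref{prop: attraction}, the center manifold reduction \eqref{eq: CM-reduction}, and the continuation criterion of Proposition \ref{prop: global solutions}. Fix $\epsilon>0$; since enlarging $\epsilon$ only weakens the required conclusion, we may assume $\epsilon$ so small that $\epsilon\le b$ (the constant of Proposition \ref{prop: global solutions}) and that $\{\chi\in X_{f}\mid\|\chi\|_{C^{1}}<\epsilon\}\subset\mathcal{V}$. We must produce $\delta>0$ with the property that $\phi\in X_{f}$ and $\|\phi\|_{C^{1}}<\delta$ force $t_{+}(\phi)=\infty$ and $\|F(t,\phi)\|_{C^{1}}<\epsilon$ for all $t\ge 0$.

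First I would control the flow on the center manifold. For $\psi\in W_{c}$, write $\psi=\varPhi_{c}\,\varGamma_{c}P_{c}\psi+w_{c}(\varPhi_{c}\,\varGamma_{c}P_{c}\psi)$ and let $z$ be the solution of \eqref{eq: CM-reduction} with $z(0)=\varGamma_{c}P_{c}\psi\in\R^{d}$. Since each solution of \eqref{eq: CM-reduction} lifts to a solution of Eq. \eqref{eq: FDE} with segment $\varPhi_{c}z(t)+w_{c}(\varPhi_{c}z(t))$, and solutions of Eq. \eqref{eq: FDE} are unique, one has $F(t,\psi)=\varPhi_{c}z(t)+w_{c}(\varPhi_{c}z(t))$ on the common interval of existence. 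Because the zero solution of \eqref{eq: CM-reduction} is stable, there is $\rho>0$ such that $|z(0)|<\rho$ makes $z$ exist on $[0,\infty)$ and remain small enough that $z(t)\in V$, $\varPhi_{c}z(t)\in C_{c,0}$, and $\|\varPhi_{c}z(t)+w_{c}(\varPhi_{c}z(t))\|_{C^{1}}<\epsilon/2$ for all $t\ge 0$; here I use $w_{c}(0)=0$ together with the continuity, or the bound from Corollary \ref{cor: bounded}, of $w_{c}$. Hence every $\psi\in W_{c}$ with $|\varGamma_{c}P_{c}\psi|<\rho$ satisfies $t_{+}(\psi)=\infty$, $F(t,\psi)\in W_{c}$, and $\|F(t,\psi)\|_{C^{1}}<\epsilon/2$ for all $t\ge 0$.

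Next I would transport this to arbitrary small initial data via the map $H$ of Proposition \ref{prop: attraction}, applied with $\epsilon_{A}:=\epsilon/2$, which yields $\delta_{A}>0$. Using the continuity of $H$, $\varGamma_{c}$, $P_{c}$ and the equality $H(0)=0$, choose $\delta\in(0,\min\{\delta_{A},\epsilon\})$ so small that $\{\chi\in X_{f}\mid\|\chi\|_{C^{1}}<\delta\}\subset\mathcal{D}$ and that $\|\phi\|_{C^{1}}<\delta$ forces $|\varGamma_{c}P_{c}H(\phi)|<\rho$. Fix $\phi\in X_{f}$ with $\|\phi\|_{C^{1}}<\delta$; since $H(\phi)\in W_{cu}=W_{c}$, the previous step gives that $t\mapsto F(t,H(\phi))$ is defined for all $t\ge 0$, lies in $W_{c}$, satisfies $\|F(t,H(\phi))\|_{C^{1}}<\epsilon/2$, and hence stays in $\mathcal{V}$. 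Put $t^{\ast}:=\sup\{\,t\in[0,t_{+}(\phi))\mid\|F(s,\phi)\|_{C^{1}}<\epsilon\text{ for }0\le s\le t\,\}$, which is positive because $\|F(0,\phi)\|_{C^{1}}=\|\phi\|_{C^{1}}<\delta<\epsilon$. For $0\le t<t^{\ast}$ both trajectories stay in $\mathcal{V}$ on $[0,t]$, so Proposition \ref{prop: attraction} and the triangle inequality give
\[
  \|F(t,\phi)\|_{C^{1}}\le\|F(t,\phi)-F(t,H(\phi))\|_{C^{1}}+\|F(t,H(\phi))\|_{C^{1}}\le\frac{\epsilon}{2}\,e^{-\eta_{A}t}+\frac{\epsilon}{2}<\epsilon.
\]
If $t^{\ast}<t_{+}(\phi)$, then letting $t\uparrow t^{\ast}$ and using continuity of $t\mapsto F(t,\phi)$, this \emph{strict} bound would persist at $t=t^{\ast}$ and slightly beyond, contradicting the maximality of $t^{\ast}$; hence $t^{\ast}=t_{+}(\phi)$. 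Consequently $\|F(t,\phi)\|_{C^{1}}<\epsilon\le b$ for all $t\in[0,t_{+}(\phi))$, so $t_{+}(\phi)=\infty$ by Proposition \ref{prop: global solutions}, and therefore $\|F(t,\phi)\|_{C^{1}}<\epsilon$ for all $t\ge 0$, which is exactly the stability of $\phi_{0}=0$.

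I expect the main difficulty to lie not in any single estimate but in the bookkeeping that makes all the cited local statements — the attraction bound on $\mathcal{D}\times\mathcal{V}$, the reduction \eqref{eq: CM-reduction} which is valid only near $0$, and the continuation criterion of Proposition \ref{prop: global solutions} — simultaneously applicable along the two coupled trajectories $F(\cdot,\phi)$ and $F(\cdot,H(\phi))$, together with getting the order of the final continuation argument right: one first shows that $t^{\ast}$ cannot be a finite interior time because the estimate it rests on is strict, and only afterwards invokes Proposition \ref{prop: global solutions} on the full existence interval to rule out blow-up in finite time.
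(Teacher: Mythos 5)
Your proposal is correct and follows essentially the same route as the paper's proof: first control the trajectories on $W_{c}$ via the stability of the reduced equation \eqref{eq: CM-reduction} together with the bound on $Dw_{c}$, then transport to arbitrary small data through the map $H$ and the attraction estimate of Proposition \ref{prop: attraction}, close with a first-exit-time/continuity argument to keep the trajectory in $\mathcal{V}$, and invoke Proposition \ref{prop: global solutions} for global existence. The only (harmless) difference is bookkeeping: the paper splits the margins as $\tilde{\epsilon}/4+\tilde{\epsilon}/2=3\tilde{\epsilon}/4$ to get a uniform gap below $\tilde{\epsilon}$, while you use $\tfrac{\epsilon}{2}e^{-\eta_{A}t}+\tfrac{\epsilon}{2}$, whose strictness for $t>0$ (plus $\|\phi\|_{C^{1}}<\delta<\epsilon$ at $t=0$) still yields the contradiction at the exit time.
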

\begin{proof}
  1. Let $\epsilon>0$ be given. We have to find some constant $\delta>0$ with the property that for each $\psi\in X_{f}$ with $\|\psi\|_{C^{1}}<\delta$ it follows $t_{+}(\psi)=\infty$ and $\|F(t,\psi)\|_{C^{1}}<\epsilon$ for all $t\geq 0$. For this purpose, let $\|\cdot\|_{\R^{d}}$ denote any norm in $\R^{d}$ and $k\geq 0$ some real ensuring $\sum_{j=1}^{d}|v_{i}|\leq k\|v\|_{\R^{d}}$ for all $v\in\R^{d}$. Further, choose any $0<\tilde{\epsilon}\leq \epsilon$ such that for all $\phi\in X_{f}$ with $\|\phi\|_{C^{1}}<\tilde{\epsilon}$ we have $\phi\in \mathcal{V}$ where $\mathcal{V}$ is the open neighborhood of $0$ in $X_{f}$ from Proposition \ref{prop: attraction}. Observe that additionally we may assume that $\tilde{\epsilon}$ satisfies $\tilde{\epsilon}<b$ with constant $b$ introduced in Proposition \ref{prop: global solutions}.

  2.  Given $\psi\in W_{c}$, suppose that for the associated solution $x^{\psi}$ of Eq. \eqref{eq: FDE_linear_nonlinear} we have $t_{+}(\psi)=\infty$ and $x_{t}^{\psi}\in W_{c}$ for all $t\geq 0$. Then, by definition, each segment $x^{\psi}_{t}$, $t\geq 0$, may be written in the form $x^{\psi}_{t}=P_{c}\,x_{t}^{\psi}+w_{c}(P_{c}\,x^{\psi}_{t})$. Moreover, the function
  \begin{equation*}
    z^{\psi}:[0,\infty)\ni t\mapsto \varGamma_{c}\, P_{c}\, x^{\psi}_{t}\in\R^{d}
  \end{equation*}
  is a solution of Eq. \eqref{eq: CM-reduction}. Next, observe that for each $t\geq 0$ we have
  \begin{equation*}
      \begin{aligned}
      \|P_{c}\,x_{t}^{\psi}\|_{C^{1}}
      &=\|\varPhi_{c}\, \varGamma_{c}( P_{c}\,x_{t}^{\psi})\|_{C^{1}}\\
      &=\|\varPhi_{c}\, z^{\psi}(t)\|_{C^{1}}\\
      &\leq\sum_{j=1}^{d}|z_{j}^{\psi}(t)|\,\|\phi_{j}\|_{C^{1}}\\
      &\leq \max_{1\leq j\leq d}\|\phi_{j}\|_{C^{1}}
      \sum_{j=1}^{d}|z_{j}^{\psi}(t)|\\
      &\leq \max_{1\leq j\leq d}\|\phi_{j}\|_{C^{1}}\,k\,\|z^{\psi}(t)\|_{\R^{d}}
      \end{aligned}
    \end{equation*}
  and therefore
  \begin{equation}\label{eq: estimate}
    \begin{aligned}
      \|x^{\psi}_{t}\|_{C^{1}}&=\|P_{c}\,x_{t}^{\psi}+w_{c}(P_{c}\,x^{\psi}_{t})\|_{C^{1}}\\
      &\leq \|P_{c}\,x_{t}^{\psi}\|_{C^{1}}+\|w_{c}(P_{c}\,x^{\psi}_{t})\|_{C^{1}}\\
      &\leq \|P_{c}\,x_{t}^{\psi}\|_{C^{1}}+\sup_{\phi\in C_{c,0}}\|Dw_{c}(\phi)\|\|P_{c}\,x_{t}^{\psi}\|_{C^{1}}\\
      &\leq (1+\sup_{\phi\in C_{c,0}}\|Dw_{c}(\phi)\|)\,\max_{1\leq j\leq d}\|\phi_{j}\|_{C^{1}}\,k\,\|z^{\psi}(t)\|_{\R^{d}}
    \end{aligned}
  \end{equation}
  with $\sup_{\phi\in C_{c,0}}\|Dw_{c}(\phi)\|$ being assumed to be bounded due to Corollary \ref{cor: bounded}.

  Now, recall that by assumption $z:\R\ni t\mapsto 0\in\R^{d}$ is stable; that is, for any fixed $\epsilon_{R}>0$ there is some $\delta_{R}(\epsilon_{R})>0$ such that for each $\tilde{z}\in\R^{d}$ with $\|\tilde{z}\|_{\R^{d}}<\delta_{R}(\epsilon_{R})$ the solution $z(\cdot;\tilde{z})$ of Eq. \eqref{eq: CM-reduction} with $z(0;\tilde{z})=\tilde{z}$ does exist for all $0\leq t<\infty$ and satisfies $\|z(t;\tilde{z})\|_{\R^{d}}<\epsilon_{R}$ as $t\geq 0$. Set
  \begin{equation*}
    \epsilon_{R}:=\frac{\tilde{\epsilon}}{2k(1+\sup_{\phi\in C_{c,0}}\|Dw_{c}(\phi)\|)\max_{1\leq j\leq d}\|\phi_{j}\|_{C^{1}}}
  \end{equation*}
  and then fix
  \begin{equation*}
    0<\delta<\frac{\delta_{R}(\epsilon_{R})}{\|\varGamma_{c} P_{c}\|}.
  \end{equation*}
  We claim that for each $\tilde{\psi}\in W_{c}$ with $\|\tilde{\psi}\|_{C^{1}}<\delta$ we have both $t_{+}(\tilde{\psi})=\infty$ and $\|F(t,\tilde{\psi})\|=\|x^{\tilde{\psi}}_{t}\|_{C^{1}}<\epsilon/2$ as $t\geq 0$. In order to see this, set $z(\tilde{\psi}):=\varGamma_{c} P_{c}\tilde{\psi}$ and observe that
  \begin{equation*}
    \|z(\tilde{\psi})\|_{\R^{d}}=\|\varGamma_{c} P_{c}\tilde{\psi}\|_{\R^{d}}\leq \|\varGamma_{c} P_{c}\|\|\tilde{\psi}\|_{C^{1}}\leq \|\varGamma_{c}P_{c}\|\delta<\delta_{R}(\epsilon_{R}).
  \end{equation*}
  Hence, the solution $z(\cdot;z(\tilde{\psi}))$ of Eq. \eqref{eq: CM-reduction} does exist for all $t\geq 0$ and additionally satisfies $\|z(t;z(\tilde{\psi}))\|_{\R^{d}}<\epsilon_{R}$ as $t\geq 0$. But then we find a solution $x:[-h,\infty)\to\R^{n}$ of Eq. \eqref{eq: FDE_linear_nonlinear} with
  \begin{equation*}
  x_{t}=\varPhi_{c} z(t;z(\tilde{\psi}))+w_{c}(\varPhi_{c}\,z(t;z(\tilde{\psi})))\in W_{c}.
  \end{equation*}
  As
  \begin{equation*}
    x_{0}=\varPhi_{c}\, z(0;z(\tilde{\psi}))+w_{c}(\varPhi_{c}\,z(0;z(\tilde{\psi}))=\varPhi_{c}\, z(\tilde{\psi})+
    w_{c}(\varPhi_{c}\,z(\tilde{\psi}))=P_{c}\tilde{\psi}+w_{c}(P_{c}\tilde{\psi})=\tilde{\psi}
  \end{equation*}
  it follows that $x_{t}=F(t,x_{0})=F(t,\tilde{\psi})=x_{t}^{\tilde{\psi}}$ as $t\geq 0$. In particular, $x_{t}^{\tilde{\psi}}\in W_{c}$ for all $t\geq 0$ such that, in view of estimate \eqref{eq: estimate}, we finally get for each $t\geq 0$
  \begin{equation}\label{eq: estimate6}
    \begin{aligned}
      \|F(t,\tilde{\psi})\|_{C^{1}}&=\|x_{t}^{\tilde{\psi}}\|_{C^{1}}\\
      &\leq (1+\sup_{\phi\in C_{c,0}}\|Dw_{c}(\phi)\|)\,\max_{1\leq j\leq d}\|\phi_{j}\|_{C^{1}}\,k\,\|z(t;z(\tilde{\psi}))\|_{\R^{d}}\\
      &\leq (1+\sup_{\phi\in C_{c,0}}\|Dw_{c}(\phi)\|)\,\max_{1\leq j\leq d}\|\phi_{j}\|_{C^{1}}\,k\,\epsilon_{R}\\
      &\leq \frac{\tilde{\epsilon}}{2}
    \end{aligned}
  \end{equation}
  as claimed. Note that, due to the choice of $\tilde{\epsilon}>0$ in the first part, the last estimate also implies $F(t,\tilde{\psi})\in \mathcal{V}$ for all $t\geq 0$.

  3. Consider now the open neighborhoods $\mathcal{V}$, $\mathcal{D}$ of $\phi_{0}=0$ in $X_{f}$, the real $\eta_{A}>0$, and the map $H:\mathcal{D}\to W_{cu}=W_{c}$ from Proposition \ref{prop: attraction}, and choose some fixed $0<\epsilon_{1}<\min\lbrace \delta,\tilde{\epsilon}/2\rbrace$. As $\mathcal{V}, \mathcal{D}$ are open, the map $H$ continuous, and $H(0)=0$, there clearly is some real $0<\delta_{1}(\epsilon_{1})<3\tilde{\epsilon}/4$ such that for all $\phi\in X_{f}$ with $\|\phi\|_{C^{1}}<\delta_{1}(\epsilon_{1})$ we have $\phi\in \mathcal{V}\cap\mathcal{D}$ and $\|H(\phi)\|_{C^{1}}<\epsilon_{1}$. Observe that for those $\phi$ it particularly follows that $H(\phi)\in W_{c}$ and $\|H(\phi)\|_{C^{1}}<\delta$. Hence, given any $\phi\in X_{f}$ with $\|\phi\|_{C^{1}}<\delta_{1}(\epsilon_{1})$, from the last part we conclude that $t_{+}(H(\phi))=\infty$ and that for each $t\geq 0$ we have both $\|F(t,H(\phi))\|_{C^{1}}<\tilde{\epsilon}/2$ and $F(t,H(\phi))\in \mathcal{V}$.

  Next, note that we may assume that the real $\delta_{1}(\epsilon_{1})>0$ is sufficiently small such that the estimate of Proposition \ref{prop: attraction} holds with constants $\epsilon_{A}=\tilde{\epsilon}/4$ and $\delta_{A}=\delta_{1}(\epsilon_{1})$; that is, we may assume that, for each $\phi\in X_{f}$ satisfying $\|\phi\|_{C^{1}}\leq \delta_{1}(\epsilon_{1})$ and for all $0\leq t<\min\lbrace t_{+}(\phi),t_{+}(H(\phi))\rbrace$ with $F(s,\phi),F(s,H(\phi))\in \mathcal{V}$ as $0\leq s\leq t$,
  \begin{equation}\label{eq: estimate2}
    \|F(t,\phi)-F(t,H(\phi))\|_{C^{1}}\leq \frac{\tilde{\epsilon}e^{-\eta_{A}t}}{4}
  \end{equation}
  holds. But by the last part, for all $\phi\in X_{f}$ with $\|\phi\|_{C^{1}}\leq \delta_{1}(\epsilon_{1})$ we have $\phi\in \mathcal{V}$, $t_{+}(H(\phi))=\infty$ and $F(t,H(\phi))\in \mathcal{V}$ as $t\geq 0$. For this reason, given $\phi\in X_{f}$ satisfying $\|\phi\|_{C^{1}}\leq \delta_{1}(\epsilon_{1})$, from estimates \eqref{eq: estimate2} we first get
  \begin{equation}\label{eq: estimate4}
    \|F(t,\phi)\|_{C^{1}}\leq \frac{\tilde{\epsilon}e^{-\eta_{A}t}}{4}+\|F(t,H(\phi))\|_{C^{1}}
  \end{equation}
  and then by using estimate \eqref{eq: estimate6}
  \begin{equation}\label{eq: estimate5}
    \|F(t,\phi)\|_{C^{1}}\leq \frac{\tilde{\epsilon}}{4}+\frac{\tilde{\epsilon}}{2}=\frac{3\tilde{\epsilon}}{4}
  \end{equation}
  as long as $F(t,\phi)$ does exist and $[0,t]\ni s\mapsto F(s,\phi)\in X_{f}$ stays in $\mathcal{V}$.

  Now, suppose that for any $\phi\in X_{f}$ with $\|\phi\|_{C^{1}}<\delta_{1}(\epsilon_{1})$ the associated trajectory $[0,t_{+}(\phi))\ni t\mapsto F(t,\phi)\in X_{f}$ would leave the neighborhood $\mathcal{V}$ of $0\in X_{f}$ for some $0<t_{\mathcal{V}}<t_{+}(\phi)$. Then, in view of $\delta_{1}(\epsilon_{1})<3\tilde{\epsilon}/4$ and $\lbrace \psi\in X_{f}\mid \|\psi\|_{C^{1}}<\tilde{\epsilon}\rbrace\subset \mathcal{V}$, we clearly would find some $0<t_{L}<t_{\mathcal{V}}$ with $F(t,\phi)\in \mathcal{V}$ for all $0\leq t<t_{L}$ and $\|F(t_{L},\phi)\|_{C^{1}}=\tilde{\epsilon}$. Especially, in consideration of estimate \eqref{eq: estimate5}, it would follow that $\|F(t,\phi)\|_{C^{1}}\leq 3\tilde{\epsilon}/4$ for all $0\leq t<t_{L}$ but $\|F(t_{L},\phi)\|_{C^{1}}=\tilde{\epsilon}$, which is a contradiction to the continuity of the map $[0,t_{+}(\phi))\ni t\mapsto \|F(t,\phi)\|_{C^{1}}\in[0,\infty)$. Thus, for all $\phi\in X_{f}$ satisfying $\|\phi\|_{C^{1}}<\delta_{1}(\epsilon_{1})$ and all $0\leq t<t_{+}(\phi)$ we have $F(t,\phi)\in \mathcal{V}$ such that both estimates \eqref{eq: estimate4} and \eqref{eq: estimate5} are fulfilled as $0\leq t<t_{+}(\phi)$. But then it is also clear that $t_{+}(\phi)=\infty$ for each $\phi\in X_{f}$ with $\|\phi\|_{C^{1}}<\delta_{1}(\epsilon_{1})$. Indeed, it immediately follows from Proposition \ref{prop: global solutions} as $\tilde{\epsilon}<b$ by our choice of $\epsilon$ in the first part. Consequently, we see that for all $\phi\in X_{f}$ with $\|\phi\|_{C^{1}}<\delta_{1}(\epsilon_{1})$ we have $t_{+}(\phi)=\infty$ and $\|F(t,\phi)\|_{C^{1}}\leq 3\tilde{\epsilon}/4<\tilde{\epsilon}\leq \epsilon$ as $t\geq 0$. This proves the assertion.
\end{proof}

Next, we extend the arguments in the last proof in order to show that $\phi_{0}$ is locally asymptotically stable provided the zero solution of the reduced differential equation is so.
\begin{prop}\label{prop: asym_stability}
  Suppose that, under the assumptions of Proposition \ref{prop: stability}, the function $z$ is not only stable but locally asymptotically stable as a solution of \eqref{eq: CM-reduction}. Then $\phi_{0}=0\in X_{f}$ is a locally asymptotically stable stationary point of $F$.
\end{prop}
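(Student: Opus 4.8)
The plan is to combine the stability statement just proven in Proposition \ref{prop: stability} with the attraction furnished by the local center manifold reduction and by Proposition \ref{prop: attraction}, in exactly the spirit of Vanderbauwhede's proof of the Pliss principle. First I would invoke Proposition \ref{prop: stability} to obtain stability of $\phi_{0}=0$; this already gives, for every $\epsilon>0$, a $\delta(\epsilon)>0$ so that $\|\psi\|_{C^{1}}<\delta(\epsilon)$ forces $t_{+}(\psi)=\infty$ and $\|F(t,\psi)\|_{C^{1}}<\epsilon$ for all $t\geq 0$, and in particular keeps the whole forward trajectory inside the neighborhood $\mathcal{V}$ from Proposition \ref{prop: attraction}. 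It remains to prove attractivity, i.e.\ to produce some $\epsilon>0$ such that $\|\psi\|_{C^{1}}<\epsilon$ implies $\|F(t,\psi)\|_{C^{1}}\to 0$ as $t\to\infty$.

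For the attractivity, the key observation is that it suffices to prove it for initial values on the center manifold $W_{c}$, because Proposition \ref{prop: attraction} then transports the conclusion to a full neighborhood of $\phi_{0}$. Concretely I would first fix $\epsilon_{0}>0$ small enough that the ball of radius $\epsilon_{0}$ in $X_{f}$ lies in $\mathcal{V}\cap\mathcal{D}$ and that the conclusions of Propositions \ref{prop: stability} and \ref{prop: attraction} are available there. For $\psi\in W_{c}$ with $\|\psi\|_{C^{1}}$ small, the segment $x^{\psi}_{t}$ stays in $W_{c}$ and equals $\varPhi_{c}z^{\psi}(t)+w_{c}(\varPhi_{c}z^{\psi}(t))$, where $z^{\psi}$ solves the reduced equation \eqref{eq: CM-reduction} with small initial data $\varGamma_{c}P_{c}\psi$; by the assumed local asymptotic stability of the zero solution of \eqref{eq: CM-reduction} we get $z^{\psi}(t)\to 0$, and then estimate \eqref{eq: estimate} (using the boundedness of $Dw_{c}$ from Corollary \ref{cor: bounded}) yields $\|x^{\psi}_{t}\|_{C^{1}}\to 0$. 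Thus every sufficiently small $\psi\in W_{c}$ is attracted to $\phi_{0}$. Now take an arbitrary $\phi\in X_{f}$ with $\|\phi\|_{C^{1}}$ so small that, by the stability part, $F(t,\phi)$ stays in $\mathcal{V}$ for all $t\geq 0$ and, by continuity of $H$, the point $H(\phi)\in W_{c}$ satisfies $\|H(\phi)\|_{C^{1}}$ small enough that both $F(t,H(\phi))$ stays in $\mathcal{V}$ and $\|F(t,H(\phi))\|_{C^{1}}\to 0$. Proposition \ref{prop: attraction} then gives, for any prescribed $\epsilon_{A}>0$, a threshold below which $\|F(t,\phi)-F(t,H(\phi))\|_{C^{1}}\leq \epsilon_{A}e^{-\eta_{A}t}$ for all $t\geq 0$; combining with $\|F(t,H(\phi))\|_{C^{1}}\to 0$ via the triangle inequality yields $\|F(t,\phi)\|_{C^{1}}\to 0$.

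Writing this out carefully requires one mild bookkeeping point: the estimate in Proposition \ref{prop: attraction} is stated with a fixed $\eta_{A}$ and a threshold $\delta_{A}$ depending on the chosen $\epsilon_{A}$, so to get genuine convergence (not merely smallness) one uses that once the trajectory is inside $\mathcal{V}$ forever the inequality holds for \emph{all} $t\geq 0$, hence the right-hand side $\epsilon_{A}e^{-\eta_{A}t}\to 0$; together with $\limsup_{t\to\infty}\|F(t,H(\phi))\|_{C^{1}}=0$ this closes the argument. One should also note explicitly that the uniform smallness of $F(t,H(\phi))$ along the whole half-line is what allows $H(\phi)$'s trajectory to remain in $\mathcal{V}$, which is needed for Proposition \ref{prop: attraction} to apply — this is where the stability already proven for points of $W_{c}$ (part 2 of the proof of Proposition \ref{prop: stability}) is reused. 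The main obstacle I anticipate is precisely this interplay of nested neighborhoods and thresholds: one must choose the initial radius for $\phi$ small enough to simultaneously (a) keep $F(\cdot,\phi)$ in $\mathcal{V}$ by stability, (b) make $\|H(\phi)\|_{C^{1}}$ small enough for the $W_{c}$-attractivity and for $F(\cdot,H(\phi))$ to stay in $\mathcal{V}$, and (c) validate the hypotheses of Proposition \ref{prop: attraction}; but since each of these is an open condition satisfied at $\phi_{0}=0$ (using $H(0)=0$ and continuity of $H$), no genuine difficulty arises beyond careful ordering of the choices, and no backward-continuability of solutions is needed here since all the delicate backward-in-time work is already encapsulated in Proposition \ref{prop: attraction}.
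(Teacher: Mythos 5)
Your proposal is correct and follows essentially the same route as the paper: both establish attractivity on $W_{c}$ by using the convergence $z(t;\varGamma_{c}P_{c}H(\psi))\to 0$ of the reduced equation together with estimate \eqref{eq: estimate}, and then transport this to a full neighborhood of $\phi_{0}$ via the exponential shadowing estimate of Proposition \ref{prop: attraction} and the triangle inequality, with the same nested choice of radii ensuring $\|H(\psi)\|_{C^{1}}$ lands in the attraction region of the reduced equation.
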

\begin{proof}
  Revisit the proof of Proposition \ref{prop: stability} and suppose that the solution $z$ is not only stable but also attractive, that is, suppose that there is some $A_{o}>0$ such that for all $\tilde{z}\in\R^{d}$ with $\|\tilde{z}\|_{\R^{d}}\leq A_{o}$ the solution $z(\cdot;\tilde{z})$ of Eq. \eqref{eq: CM-reduction} does exist for all $t\geq 0$ and converges to $0\in\R^{d}$ as $t\to\infty$. Then we may assume that $A_{o}< \delta_{R}(\epsilon_{R})$ holds, since otherwise we could take the real $\delta_{R}(\epsilon_{R})$ instead of $A_{o}$ for the definition of an attraction region of $z$. Further, combining the continuity of the map $H$ with $H(0)=0$, we find some $0<A_{d}<\delta_{1}(\epsilon_{1})$ such that for all $\phi\in X_{f}$ with $\|\phi\|_{C^{1}}<A_{d}$ it follows that $\|H(\phi)\|_{C^{1}}<A_{o}/\|\mathit{\Gamma}_{c}P_{c}\|$. Consider now any $\psi\in X_{f}$ with $\|\psi\|_{C^{1}}<A_{d}$, and set $\tilde{\psi}:=H(\psi)\in W_{cu}=W_{c}$ and $z(\tilde{\psi}):=\mathit{\Gamma}_{c}P_{c}\tilde{\psi}$. By assumption, $\|\psi\|_{C^{1}}<\delta_{1}(\epsilon_{1})$ and so $\|\tilde{\psi}\|_{C^{1}}<\epsilon_{1}<\delta$. Therefore, both
  solutions $x^{\psi}$ and $x^{\tilde{\psi}}$ of Eq. \eqref{eq: FDE_linear_nonlinear} do exist for all $t\geq 0$. Furthermore, all segments of $x^{\tilde{\psi}}$ belong to $W_{c}$ such that from the estimates \eqref{eq: estimate4} and \eqref{eq: estimate} we conclude that
  \begin{equation}\label{eq: estimate7}
  \begin{aligned}
    \|F(t,\psi)\|_{C^{1}}&\leq \frac{\tilde{\epsilon}e^{-\eta_{A}t}}{4}+\|F(t,\tilde{\psi})\|_{C^{1}}\\
    &\leq\frac{\tilde{\epsilon}e^{-\eta_{A}t}}{4}+(1+\sup_{\phi\in C_{c,0}}\|Dw_{c}(\phi)\|)\,k\,\max_{1\leq j\leq d}\|\phi_{j}\|_{C^{1}}\|z(t;z(\tilde{\psi}))\|_{\R^{d}}
  \end{aligned}
  \end{equation}
  for all $t\geq 0$. As $\eta_{A}>0$ and
  \begin{equation*}
    \|z(0;z(\tilde{\psi}))\|_{\R^{d}}=\|z(\tilde{\psi})\|_{\R^{d}}=\|\mathit{\Gamma}_{c}P_{c}\tilde{\psi}\|_{\R^{d}}\leq \|\mathit{\Gamma}_{c}P_{c}\|\|\tilde{\psi}\|_{C^{1}}< \|\mathit{\Gamma}_{c}P_{c}\|\frac{A_{o}}{\|\mathit{\Gamma}_{c}P_{c}\|}=A_{0}
  \end{equation*}
   it follows that the right-hand side of \eqref{eq: estimate7} converges to $0\in\R$ as $t\to\infty$. But then we clearly also have $F(t,\psi)\to 0$ for $t\to\infty$, and this proves the assertion.
\end{proof}

Finally, we complete the proof of Theorem \ref{thm: reduction} by showing that in the case of an unstable zero solution of the reduced differential equation the trivial stationary point $\phi_{0}$ of $F$ is unstable as well.
\begin{prop}\label{prop: instability}
  Given $f$ and $\sigma(G_{e})$ as in Theorem \ref{thm: reduction}, assume that the zero function $z(t)=0\in\R^{d}$, $t\in\R$, is unstable as a solution of Eq. \eqref{eq: CM-reduction}. Then $\phi_{0}=0\in X_{f}$ is an unstable stationary point of $F$.
\end{prop}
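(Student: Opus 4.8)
The plan is to read off the instability of $\phi_{0}$ directly from the center manifold reduction~\eqref{eq: CM-reduction}, \emph{without} invoking the attraction property of Proposition~\ref{prop: attraction}: we shall produce, arbitrarily close to $0$ in $C^{1}$, initial segments lying on $W_{c}$ whose $F$-trajectories are forced to leave a fixed $C^{1}$-neighborhood of $\phi_{0}$.

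First I would extract escape data from the instability of the zero solution of Eq.~\eqref{eq: CM-reduction}. Shrinking the escape radius if necessary, this instability provides some $\epsilon_{0}>0$ with $\overline{B_{\epsilon_{0}}(0)}\subset V$ and with $\varPhi_{c}\,z\in C_{c,0}$ for all $z\in\R^{d}$ satisfying $\|z\|_{\R^{d}}\le\epsilon_{0}$, such that for every $\delta>0$ there is a $\tilde{z}\in\R^{d}$ with $\|\tilde{z}\|_{\R^{d}}<\delta$ for which the solution $z(\cdot;\tilde{z})$ of Eq.~\eqref{eq: CM-reduction} reaches $\|z(t_{1};\tilde{z})\|_{\R^{d}}=\epsilon_{0}$ at some finite \emph{first} time $t_{1}=t_{1}(\tilde{z})>0$. (That instability may be realized with an arbitrarily small prescribed escape radius follows from the continuity of $z(\cdot;\tilde{z})$ and the intermediate value theorem, together with the fact that if $z(\cdot;\tilde{z})$ does not exist for all $t\ge 0$ then it must leave the compact set $\overline{B_{\epsilon_{0}}(0)}\subset V$.) By the choice of $t_{1}$ one has $z(t;\tilde{z})\in\overline{B_{\epsilon_{0}}(0)}\subset V$ for $0\le t\le t_{1}$, so $z(\cdot;\tilde{z})|_{[0,t_{1}]}$ is a genuine solution of Eq.~\eqref{eq: CM-reduction}.

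Next I would lift this reduced solution. Applying the center manifold correspondence recalled after Eq.~\eqref{eq: CM-reduction} on $I=[0,t_{1}]$, the formula $x_{t}:=\varPhi_{c}\,z(t;\tilde{z})+w_{c}(\varPhi_{c}\,z(t;\tilde{z}))$ defines a solution $x$ of Eq.~\eqref{eq: FDE_linear_nonlinear} on $[-h,t_{1}]$ with $x_{t}\in W_{c}$ for $0\le t\le t_{1}$; in particular $\tilde{\psi}:=x_{0}=\varPhi_{c}\,\tilde{z}+w_{c}(\varPhi_{c}\,\tilde{z})\in W_{c}\subset X_{f}$, and by uniqueness of solutions $x=x^{\tilde{\psi}}$ on $[-h,t_{1}]$, so $t_{+}(\tilde{\psi})>t_{1}$ and $F(t_{1},\tilde{\psi})=x_{t_{1}}$. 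Arguing exactly as for estimate~\eqref{eq: estimate} (together with Corollary~\ref{cor: bounded}), with the finite constant $c_{1}:=\bigl(1+\sup_{\phi\in C_{c,0}}\|Dw_{c}(\phi)\|\bigr)\,k\,\max_{1\le j\le d}\|\phi_{j}\|_{C^{1}}$, one obtains
\begin{equation*}
  \|\tilde{\psi}\|_{C^{1}}\le c_{1}\,\|\tilde{z}\|_{\R^{d}}<c_{1}\,\delta,
\end{equation*}
while the identity $z(t_{1};\tilde{z})=\varGamma_{c}\,P_{c}\,x_{t_{1}}$ and the boundedness of $\varGamma_{c}\,P_{c}$ yield
\begin{equation*}
  \|F(t_{1},\tilde{\psi})\|_{C^{1}}=\|x_{t_{1}}\|_{C^{1}}\ge\frac{\|z(t_{1};\tilde{z})\|_{\R^{d}}}{\|\varGamma_{c}\,P_{c}\|}=\frac{\epsilon_{0}}{\|\varGamma_{c}\,P_{c}\|}=:\epsilon_{\ast}>0 ,
\end{equation*}
where $\epsilon_{\ast}$ depends on neither $\delta$ nor $\tilde{z}$.

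Finally I would conclude. Put $\epsilon:=\epsilon_{\ast}$. Given any $\delta>0$, apply the instability of the zero solution of Eq.~\eqref{eq: CM-reduction} with $\delta/c_{1}$ in place of $\delta$ to obtain $\tilde{z}$, and hence $\tilde{\psi}\in X_{f}$ with $\|\tilde{\psi}\|_{C^{1}}<\delta$, $t_{1}\in[0,t_{+}(\tilde{\psi}))$ and $\|F(t_{1},\tilde{\psi})\|_{C^{1}}\ge\epsilon$. Since $\delta>0$ was arbitrary, $\phi_{0}=0$ fails to be stable in the sense recalled in the introduction, hence it is unstable for $F$. Note that this argument uses neither Proposition~\ref{prop: attraction} nor Proposition~\ref{prop: global solutions}, since the unstable behaviour is witnessed entirely inside $W_{c}$ and only up to the finite time $t_{1}$. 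I expect the only genuinely delicate points to be bookkeeping ones: keeping the reduced solution and its lifted segment inside the respective domains $V$ and $W_{c}$ up to \emph{and including} the escape time $t_{1}$ — which is the reason for passing to the first hitting time of $\epsilon_{0}$ and for keeping $\overline{B_{\epsilon_{0}}(0)}\subset V$ — and extracting from estimate~\eqref{eq: estimate} and the boundedness of $\varGamma_{c}\,P_{c}$ the two one-sided comparisons between the $\R^{d}$-coordinate on $W_{c}$ and the $C^{1}$-norm.
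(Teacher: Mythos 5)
Your proof is correct, but it runs in the opposite direction from the paper's. The paper argues by contraposition: it assumes $\phi_{0}$ is stable, takes a small $\tilde{z}$, forms $\tilde{\psi}=\varPhi_{c}\tilde{z}+w_{c}(\varPhi_{c}\tilde{z})$, and uses the stability of $\phi_{0}$ together with property (ii) of $W_{c}$ (local positive invariance relative to $N_{c}$) to keep $F(t,\tilde{\psi})$ on $W_{c}$ for all $t\geq 0$; projecting down via $\varGamma_{c}P_{c}$ then shows the reduced zero solution is stable. You instead argue directly: you take an escaping reduced solution, truncate it at the first exit time $t_{1}$ from $\overline{B_{\epsilon_{0}}(0)}$, and \emph{lift} it through the correspondence $x_{t}=\varPhi_{c}z(t)+w_{c}(\varPhi_{c}z(t))$ to witness instability of $\phi_{0}$ at the fixed radius $\epsilon_{0}/\|\varGamma_{c}P_{c}\|$. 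The two one-sided norm comparisons you use (the upper bound on $\|\tilde{\psi}\|_{C^{1}}$ via Corollary~\ref{cor: bounded} and the lower bound via $\|\varGamma_{c}P_{c}\|$) are exactly the estimates appearing in the paper's proof, just deployed in the reverse roles. What your route buys is that you never need the positive-invariance property of $W_{c}$ relative to $N_{c}$ (the lifted trajectory lies on $W_{c}$ by construction and is identified with $x^{\tilde{\psi}}$ by uniqueness); what it costs is the extra bookkeeping you correctly identify — passing to the first hitting time and shrinking $\epsilon_{0}$ so that the reduced solution stays in $V$ and its lift stays in the domain of $w_{c}$ up to and including $t_{1}$, including the case where the reduced solution fails to exist globally. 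Both arguments are sound and of comparable length; neither needs Proposition~\ref{prop: attraction} or Proposition~\ref{prop: global solutions}.
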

\begin{proof}
  Suppose that $\phi_{0}=0$ is a stable stationary point of $F$. Then we claim that the zero solution of Eq. \eqref{eq: CM-reduction} is stable as well such that the assertion of the proposition follows by the contrapositive.

  In order to see our claim, let $\epsilon>0$ be given. Then, by using the continuity of $P_{c}$ and $\id_{C^{1}}-P_{c}$, we find a real $0<\epsilon_{d}<\epsilon/\|\mathit{\Gamma}_{c}P_{c}\|$
  ensuring both
  \begin{equation*}
  \lbrace P_{c}\phi\mid \phi\in C^{1} \text{ with }\|\phi\|_{C^{1}}<\epsilon_{d}\rbrace\subset C_{c,0}
  \end{equation*}
  and
  \begin{equation*}
   \lbrace (\id_{C^{1}}-P_{c})\phi\mid\phi\in C^{1}\text{ with }\|\phi\|_{C^{1}}<\epsilon_{d}\rbrace\subset C_{su,0}^{1}.
  \end{equation*}
  Additionally, in consideration of the stability of $\phi_{0}$, there is some $0<\delta_{d}< \epsilon_{d}$ such that for all $\phi\in X_{f}$ with $\|\phi\|_{C^{1}}<\delta_{d}$ we have $t_{+}(\phi)=\infty$ and $\|F(t,\phi)\|_{C^{1}}< \epsilon_{d}$ as $0\leq t<t_{+}(\phi)$.

  Choose now
  \begin{equation*}
    0<\delta<\min\left\lbrace\frac{\delta_{d}}{(1+\sup_{\phi\in C_{c,0}}\|Dw_{c}(\phi)\|)k\max_{1\leq i\leq d}\|\phi_{i}\|_{C^{1}}},\delta_{d}\right\rbrace
  \end{equation*}
  and consider any $\tilde{z}\in\R^{d}$ satisfying $\|\tilde{z}\|_{\R^{d}}<\delta$. We show that the solution $z(\cdot,\tilde{z})$ of Eq. \eqref{eq: CM-reduction} with $z(0;\tilde{z})=\tilde{z}$ does exist and is bounded by $\epsilon$ for all non-negative reals. For this purpose, observe that for $\tilde{\psi}:=\varPhi_{c}\tilde{z}+w_{c}(\varPhi_{c}\tilde{z})\in W_{c}$ it follows that
  \begin{equation*}
    \begin{aligned}
      \|\tilde{\psi}\|_{C^{1}}&\leq \|\varPhi_{c}\tilde{z}\|_{C^{1}}+\|w_{c}(\varPhi_{c}\tilde{z})\|_{C^{1}}\\
      &\leq (1+\sup_{\phi\in C_{c,0}}\|Dw_{c}(\phi)\|)\|\varPhi_{c}\tilde{z}\|_{C^{1}}\\
      &\leq (1+\sup_{\phi\in C_{c,0}}\|Dw_{c}(\phi)\|)\sum_{j=1}^{d}|\tilde{z}_{j}|\|\phi_{j}\|_{C^{1}}\\
      &\leq (1+\sup_{\phi\in C_{c,0}}\|Dw_{c}(\phi)\|)\max_{1\leq j\leq d}\|\phi_{j}\|_{C^{1}}\sum_{j=1}^{d}|\tilde{z}_{j}|\\
      &\leq k(1+\sup_{\phi\in C_{c,0}}\|Dw_{c}(\phi)\|)\max_{1\leq j\leq d}\|\phi_{j}\|_{C^{1}}\|\tilde{z}\|_{\R^{d}}\\
          &\leq k(1+\sup_{\phi\in C_{c,0}}\|Dw_{c}(\phi)\|)\max_{1\leq j\leq d}\|\phi_{j}\|_{C^{1}}\delta\\
          &<\delta_{d}.
    \end{aligned}
  \end{equation*}
  Therefore, $t_{+}(\tilde{\psi})=\infty$ and $\|F(t,\tilde{\psi})\|_{C^{1}}< \epsilon_{d}$ as $0\leq t<t_{+}(\tilde{\psi})$. Furthermore, in view of the choice of $\epsilon_{d}$, $F(t,\tilde{\psi})$ remains for all $t\geq 0$ in the open set $N_{c}=C_{c,0}+C_{su,0}^{1}$ where $W_{c}$ is positively invariant with respect $F$ (see property (ii) of $W_{c}$). As additionally $F(0,\tilde{\phi})\in W_{c}$ it follows that $F(t,\tilde{\psi})\in W_{c}$ for all $t\geq 0$. Thus, $F(t,\tilde{\psi})=P_{c}F(t,\tilde{\psi})+w_{c}(P_{c}F(t,\tilde{\psi}))$ as $t\geq 0$ and the $C^{1}$-smooth curve
  \begin{equation*}
    z^{\tilde{\psi}}:[0,\infty)\ni t\mapsto \varGamma_{c}P_{c}F(t,\tilde{\psi})\in\R^{d}
  \end{equation*}
  with initial value
  \begin{equation*}
    z^{\tilde{\psi}}(0)=\varGamma_{c}P_{c}F(0,\tilde{\psi})=(\varGamma_{c}\circ P_{c})(\varPhi_{c}\tilde{z}+w_{c}(\varPhi_{c}\tilde{z}))=\varGamma_{c}(\varPhi_{c}\tilde{z})=\tilde{z}
  \end{equation*}
  satisfies Eq. \eqref{eq: CM-reduction} for all $0\leq t<\infty$. From the uniqueness of solutions we conclude that the solution $z(\cdot;\tilde{z})$ of Eq. \eqref{eq: CM-reduction} with $z(0;\tilde{z})=\tilde{z}$ does exist for all $t\geq 0$ and coincide with the curve $z^{\tilde{\psi}}$. In particular,
  \begin{equation*}
    \|z(t,\tilde{z})\|_{\R^{d}}=\|z^{\tilde{\psi}}(t)\|_{\R^{d}}
    =\|\varGamma_{c}P_{c}F(t,\tilde{\psi})\|_{\R^{d}}\leq \|\varGamma P_{c}\|\|F(t,\tilde{\psi})\|_{C^{1}}<\|\varGamma P_{c}\|\epsilon_{d}<\epsilon
  \end{equation*}
  as $t\geq 0$, which finally finishes the proof of our claim and so of the proposition.
\end{proof}

\section{Example}
In this final section, we give a concrete example to illustrate the application of Theorem \ref{thm: linearized} and, especially, of Theorem \ref{thm: reduction}. For doing so, set $h=1$ and $n=1$ in the definitions of the Banach spaces $C$ and $C^{1}$, and consider the scalar differential equation
\begin{equation}\label{eq: exchange}
  x^{\prime}(t)=a[x(t)-x(t-r)]-|x(t)|x(t)
\end{equation}
with a real parameter $a>0$ and a delay $r>0$. This equation represents a mathe\-matical model to describe short-term fluctuations of exchange rates. Originally, it was motivated in the case of the constant delay $r=1$ and a thorough discussion of Eq. \eqref{eq: exchange} and the behavior of its solutions in this situation is contained in Brunovsk\'{y} et al. \cite{Brunovsky2004}. Here, we consider the situation of a state-dependent delay $r=r(x(t))>0$, that is,
\begin{equation}\label{eq: gen_exchange}
  x^{\prime}(t)=a[x(t)-x(t-r(x(t)))]-|x(t)|x(t),
\end{equation}
which is studied in \cite{Stumpf2010,Stumpf2012}. For the delay function $r:\R\to\R$ under consideration, it is assumed that the following hypotheses hold:
\begin{description}
  \item[(DF1)] $r$ is $C^{1}$-smooth,
  \item[(DF2)] $0<r(s)\leq r(0)=:r_{0}$ for all $s\in\R$,
  \item[(DF3)] $r(s)=r(-s)$ for all $s\in\R$, and
  \item[(DF4)] $r_{0}=1$.
\end{description}
Observe that different results in \cite{Stumpf2010,Stumpf2012} -- in particular, the main result in \cite{Stumpf2012} -- require the additional assumption
\begin{description}
  \item[(DF5)] $|r^{\prime}(s)|<1/(4a^2)$ for all $-2a\leq s\leq 2a$
\end{description}
on the delay function $r$, where the real $a>0$ is just the parameter involved in Eq. \eqref{eq: gen_exchange}. However, for the application of Theorem \ref{thm: linearized} as well as of Theorem \ref{thm: reduction} discussed in the following, the restriction (DF5) on $r$ is not needed. Therefore, unless otherwise stated, we consider Eq. \eqref{eq: gen_exchange} under the assumption that $r$ does only satisfy conditions (DF1)-(DF4), and begin our discussion with repeating some relevant material from \cite{Stumpf2010,Stumpf2012} without proofs below.

To begin with, observe that by the map
\begin{equation*}
  f:C^{1}\ni\phi\mapsto a[\phi(0)-\phi(-r(\phi(0)))]-|\phi(0)|\phi(0)\in\R
\end{equation*}
Eq. \eqref{eq: gen_exchange} takes the more abstract from
\begin{equation}\label{eq: gen_exchange_abs}
  x^{\prime}(t)=f(x_{t}).
\end{equation}
Obviously, $f(0)=0$, and it is also not hard to see that $f$ satisfies the smoothness conditions (S1) and (S2). In particular,  $X_{f}=\lbrace \psi\in C^{1}\mid \psi^{\prime}(0)=f(\psi)\rbrace$ is not empty and $x:\R\ni t\mapsto 0\in\R$ is a solution of Eq. \eqref{eq: gen_exchange_abs}. Moreover, for each $\phi\in X_{f}$ there is a uniquely determined solution $x^{\phi}:[-1,\infty)\to\R$ of Eq. \eqref{eq: gen_exchange_abs} with $x_{0}^{\phi}=\phi$ and $x_{t}^{\phi}\in X_{f}$ as $t\geq 0$. The relations $F(t,\phi)=x^{\phi}_{t}$, $0\leq t<\infty$ and $\phi\in X_{f}$, define a continuous semiflow $F:[0,\infty)\times X_{f}\to X_{f}$ with $C^{1}$-smooth time-$t$-maps $F_{t}:=F(t,\cdot)$, $t\geq 0$, and the stationary point $\phi_{0}=0\in X_{f}$.

Now, we are interested in the stability of the stationary point $\phi_{0}=0$ of the semiflow $F$ in dependence of the parameter $a>0$. For this reason, we write Eq. \eqref{eq: gen_exchange_abs} as
\begin{equation}
  x^{\prime}(t)=Lx_{t}+g(x_{t})
\end{equation}
with the linear operator
\begin{equation*}
  L=Df(0):C^{1}\ni\phi\mapsto a[\phi(0)-\phi(-1)]\in\R
\end{equation*}
and the non-linear map
\begin{equation*}
  g:C^{1}\ni\phi\mapsto f(\phi)-L\phi=a[\phi(-1)-\phi(-r(\phi(0)))]-|\phi(0)|\phi(0)\in\R.
\end{equation*}
The linear extension $L_{e}=D_{e}f(0)$ of the bounded linear operator $L:C^{1}\to\R$ to the greater Banach space $C$ is obviously given by
\begin{equation*}
  L_{e}=D_{e}f(0):C\ni\phi\mapsto a[\phi(0)-\phi(-1)]\in\R
\end{equation*}
and induces the linear retarded functional differential equation
\begin{equation}\label{eq: linearization_mod}
  v^{\prime}(t)=L_{e}v_{t}=a[v(t)-v(t-1)].
\end{equation}
For each $\psi\in C$ this linear equation has a unique solution $v^{\psi}:[-1,\infty)\to\R$ satisfying $v^{\psi}_{0}=\psi$. The associated solution semigroup $T_{e}:=\lbrace T_{e}(t)\rbrace_{t\geq 0}$ is defined by $T_{e}(t):C\ni\psi\mapsto v^{\psi}_{t}\in C$ and recall that it is closely related to the linearization $T:=\lbrace DF_{t}(0)\rbrace_{t\geq 0}$ of the semiflow $F$ at $\phi_{0}=0\in X_{f}$. In particular, we have $\sigma(G_{e})=\sigma(G)$ for the spectra of the generators $G_{e}$ and $G$ of the two semigroups $T_{e}$ and $T$, respectively. Using the ansatz $z(t)=e^{\lambda t}$ with $\lambda\in \mathbb{C}$ for a solution of Eq. \eqref{eq: linearization_mod}, we find the characteristic equation
\begin{equation}\label{eq: char}
\triangle (\lambda)=0
\end{equation}
with $\triangle(\lambda):=\lambda-a[1-e^{-\lambda}]$, and the set of roots of Eq. \eqref{eq: char} just coincides with $\sigma(G_{e})$ (or more precisely, with the spectrum of the complexification $(G_{e})_{\C}$ of the operator $G_{e}$). Furthermore, the order of a root $\lambda\in\C$ of Eq. \eqref{eq: char} agrees with the dimension of the generalized eigenspace of $G_{e}$ associated with $\lambda$. For the location of these roots in the complex plane one finds the following:
\begin{itemize}
  \item[(1)] If $a>0$ and if $a\not=1$, then $\lambda_{0}=0$ is a simple root of Eq. \eqref{eq: char}. In the situation $a=1$, $\lambda=0$ is a double root of Eq. \eqref{eq: char}.
  \item[(2)] For all $a>0$ with $a\not=1$, Eq. \eqref{eq: char} has a unique non-zero root $\lambda=\kappa\in\R$. The root $\lambda=\kappa$ is simple, and $\kappa<0$ for $0<a<1$ and $\kappa>0$ for $a>1$.
  \item[(3)] Apart from the real roots from (1) and (2), all other roots of Eq. \eqref{eq: char} for parameter $a>0$ occur in conjugate complex pairs $\mu\pm i\nu$ with $\mu<0$ and $\nu\not=0$.
\end{itemize}

By combining the statement (2) about $\sigma(G_{e})$ with part (i) of Theorem \ref{thm: linearized}, we immediately get our first stability result:
\begin{prop}\label{prop: example_unstable}
   Let the delay function $r$ satisfy the assumptions (DF1)-(DF4). Then for each $a>1$ the stationary point $\phi_{0}=0$ of the semiflow $F$, or equivalently, the zero solution of Eq. \eqref{eq: gen_exchange}, is unstable.
\end{prop}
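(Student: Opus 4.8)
The plan is to invoke part (i) of Theorem~\ref{thm: linearized} directly, using the structural information about $\sigma(G_{e})$ recorded in statements (1)--(3) above. Recall that Theorem~\ref{thm: linearized}(i) asserts that $\phi_{0}=0$ is unstable for the semiflow $F$ as soon as there exists some $\lambda\in\sigma(G_{e})$ with $\Re(\lambda)>0$. Since the set of roots of the characteristic equation \eqref{eq: char} coincides with $\sigma(G_{e})$, it therefore suffices to exhibit one root of $\triangle(\lambda)=\lambda-a[1-e^{-\lambda}]=0$ with strictly positive real part in the regime $a>1$.

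First I would fix $a>1$ and appeal to statement (2): for every $a>0$ with $a\neq 1$ the characteristic equation has a unique nonzero real root $\lambda=\kappa\in\R$, and this root satisfies $\kappa>0$ precisely when $a>1$. In particular $\kappa\in\sigma(G_{e})$ and $\Re(\kappa)=\kappa>0$. (If one wishes to verify statement (2) from scratch rather than cite it, note that $\triangle(0)=0$ forces $\lambda=0$ to be a root for all $a$, that $\triangle$ restricted to the real axis is smooth with $\triangle'(0)=1-a<0$ for $a>1$, and that $\triangle(\lambda)\to+\infty$ as $\lambda\to+\infty$ since the linear term dominates; hence a sign change of $\triangle$ on $(0,\infty)$ yields a positive real root, and convexity/monotonicity considerations give uniqueness of the nonzero root.)

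Having produced $\kappa\in\sigma(G_{e})$ with $\Re(\kappa)>0$, the hypotheses of Theorem~\ref{thm: linearized}(i) are met for the map $f$ associated with Eq.~\eqref{eq: gen_exchange_abs}, which was already observed to satisfy $f(0)=0$ together with the smoothness conditions (S1) and (S2). We therefore conclude that $\phi_{0}=0\in X_{f}$ is an unstable stationary point of $F$. Unwinding the identification of $F$ with the solution semiflow of Eq.~\eqref{eq: gen_exchange_abs}, and hence of Eq.~\eqref{eq: gen_exchange}, this is exactly the assertion that the zero solution of Eq.~\eqref{eq: gen_exchange} is unstable for every $a>1$, completing the proof.

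There is essentially no obstacle here: the result is a bookkeeping corollary, and the only mild subtlety is making sure the dictionary between ``roots of \eqref{eq: char}'', ``spectrum $\sigma(G_{e})$'', ``instability of the linearization'', and ``instability of $\phi_{0}$ for $F$'' is applied in the right order. The genuine content --- the spectral analysis behind statements (1)--(3) and the principle of linearized instability (Theorem~\ref{thm: linearized}(i)) --- has already been established, so the proof of Proposition~\ref{prop: example_unstable} is just a two-line citation.
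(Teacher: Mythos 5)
Your proof is correct and follows exactly the paper's own argument: the paper likewise obtains Proposition~\ref{prop: example_unstable} immediately by combining statement (2) about the positive real root $\kappa$ of the characteristic equation \eqref{eq: char} for $a>1$ with the principle of linearized instability, Theorem~\ref{thm: linearized}(i). Your optional parenthetical sketch of verifying statement (2) directly is sound but not needed, since the paper cites that spectral information as established.
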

Under the conditions (DF1)-(DF5) on $r$, the last result was already shown in \cite[Corollary 4.11]{Stumpf2010}. However, for $0<a\leq 1$ the application of Theorem \ref{thm: linearized} fails due to the presence of the zero root of Eq. \eqref{eq: char}, and the article \cite{Stumpf2010} contains only the conjecture -- compare page 109 in \cite{Stumpf2010} -- that in this situation the zero solution of Eq. \eqref{eq: gen_exchange} should be locally asymptotically stable. Below, we prove this conjecture at least for $0<a<1$ rigorously.

Given $0<a\not=1$, $\lambda_{0}=0$ is the only root of Eq. \eqref{eq: char} which lies on the imaginary axis of the complex plane, and $\lambda_{0}$ is simple. The center space $C_{c}\subset C^{1}$ has the dimension one and is spanned by the constant function $\eta_{0}:[-1,0]\ni \theta\mapsto 1\in\R$ in view of $G_{e}\eta_{0}=\eta^{\prime}_{0}=0=0\cdot \eta_{0}$. In particular, near the stationary point $\phi_{0}=0$ of $F$ there is a one-dimensional local center manifold $W_{c}=\lbrace \phi+w_{c}(\phi)\mid \phi\in C_{c,0}\rbrace$ given by a $C^{1}$-smooth map $w_{c}: C_{c,0}\to C_{u}\oplus C_{s}^{1}$ defined on some open neighborhood $C_{c,0}$ about $0$ in $C_{c}=\R\eta_{0}$ and satisfying both $w_{c}(0)=0$ and $Dw_{c}(0)=0$. Next, consider the reduction of $F$ to $W_{c}$. In this concrete example, the matrix $B_{c}\in\R^{1\times 1}$ from Eq. \eqref{eq: CM-reduction} is just the zero matrix. Hence, the center manifold reduction of $F$ to $W_{c}$ is given by
\begin{equation}\label{eq: CM-reduction-example}
  z^{\prime}(t)=h(z(t))
\end{equation}
with a continuously differentiable function $h:\R\supset V\to \R$ from an open interval $V$ containing $0\in\R$ into $\R$. We have $h(0)=0$ and $Dh(0)=0$. Moreover, as shown in \cite[Chapter 4.3]{Stumpf2010}, in close vicinity of $0\in\R$ the function $h$ has the asymptotic expansion
\begin{equation*}
  h(z)=-\frac{1}{1-a}|z|z+o(|z|^{2})
\end{equation*}
with the involved parameter $a$.
Consequently, in a sufficiently small neighborhood of the origin the center manifold reduction \eqref{eq: CM-reduction-example} reads
\begin{equation}\label{eq: CM-reduction-example-asym}
  z^{\prime}(t)=-\frac{1}{1-a}|z(t)|z(t)+o(|z(t)|^{2}).
\end{equation}
By combining this observation with Theorem \ref{thm: reduction}, we are now able to prove the local asymptotic stability of the stationary point $\phi_{0}$ of $F$ in case $0<a<1$.
\begin{prop}\label{prop: example_asym}
 Suppose that the delay function $r$ satisfies assumptions (DF1)-(DF4). Then for each $0<a<1$ the stationary point $\phi_{0}=0$ of $F$, and so the zero solution of Eq. \eqref{eq: gen_exchange}, is locally asymptotically stable.
\end{prop}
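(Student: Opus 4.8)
The plan is to verify the hypotheses of Theorem \ref{thm: reduction} and then to settle the local stability question for the one-dimensional reduced equation \eqref{eq: CM-reduction-example-asym} by an elementary argument. For $0<a<1$, the statements (1)--(3) about the roots of the characteristic equation \eqref{eq: char} show that $\lambda_{0}=0$ is a simple root, that the unique non-zero real root $\kappa$ satisfies $\kappa<0$, and that every remaining root has negative real part. Hence $\sigma_{u}(G_{e})=\emptyset$ while $\sigma_{c}(G_{e})=\lbrace 0\rbrace\not=\emptyset$, so the standing assumptions of Theorem \ref{thm: reduction} are met and $d=\dim C_{c}=1$. By that theorem it therefore suffices to prove that $z\equiv 0$ is locally asymptotically stable as a solution of the scalar equation \eqref{eq: CM-reduction-example}, that is, of $z'(t)=h(z(t))$ with $h(z)=-\frac{1}{1-a}\,|z|z+o(|z|^{2})$ as $z\to 0$.

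To this end set $c:=\frac{1}{1-a}$, which is positive precisely because $0<a<1$. From the expansion of $h$, given any $0<\varepsilon<c$ there is a $\delta>0$ with $|h(z)+c\,|z|z|\leq\varepsilon z^{2}$ for all $|z|<\delta$, and a short computation then yields $z\,h(z)\leq-(c-\varepsilon)\,|z|^{3}\leq 0$ for all $z$ with $|z|<\delta$, with strict inequality when $z\not=0$. I would now use the Lyapunov function $V(z):=z^{2}$: along any solution $z$ of \eqref{eq: CM-reduction-example} that stays in $(-\delta,\delta)$ one has $\frac{d}{dt}V(z(t))=2z(t)h(z(t))\leq-2(c-\varepsilon)\,|z(t)|^{3}\leq 0$, so $t\mapsto V(z(t))$ is nonincreasing. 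Consequently every solution starting in $(-\delta,\delta)$ remains there for all $t\geq 0$ (which gives stability and forward global existence), and $V(z(t))$ decreases to some limit $\ell\geq 0$; were $\ell>0$, the solution would stay in a compact annulus $\lbrace\sqrt{\ell}\leq|w|\leq|z(0)|\rbrace$ on which $w\,h(w)$ is bounded above by some $-m<0$, forcing $V(z(t))\leq V(z(0))-2mt\to-\infty$, which is impossible. Hence $\ell=0$, i.e. $z(t)\to 0$ as $t\to\infty$, so $z\equiv 0$ is locally asymptotically stable for \eqref{eq: CM-reduction-example}, and the locally-asymptotically-stable case of Theorem \ref{thm: reduction} then yields the assertion.

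The only point needing care is the passage from the \emph{formal} expansion $h(z)=-\frac{1}{1-a}|z|z+o(|z|^{2})$, recalled from \cite[Chapter 4.3]{Stumpf2010}, to the quantitative inequality $|h(z)+c\,|z|z|\leq\varepsilon z^{2}$ on an explicit neighbourhood of $0$; once that is in hand, everything reduces to the scalar ODE estimate above. It is worth emphasising that the conclusion hinges on the sign of the leading coefficient, hence on $0<a<1$: for $a>1$ the reduction would produce a leading term of the opposite sign, in accordance with the instability in Proposition \ref{prop: example_unstable} (which in that range already follows from $\kappa>0$ together with Theorem \ref{thm: linearized}), while the borderline case $a=1$, where $\lambda_{0}=0$ becomes a double root, falls outside the present analysis.
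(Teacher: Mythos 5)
Your proposal is correct and follows essentially the same route as the paper: verify that $\sigma_{u}(G_{e})=\emptyset$ and $\sigma_{c}(G_{e})=\lbrace 0\rbrace$ for $0<a<1$, show that $z\equiv 0$ is locally asymptotically stable for the reduced equation \eqref{eq: CM-reduction-example} using the Lyapunov function $z^{2}$ (the paper uses $z^{2}/2$ and cites Amann for the standard Lyapunov theorem, whereas you carry out the monotonicity and LaSalle-type limit argument by hand), and then invoke Theorem \ref{thm: reduction}. The extra care you take in converting the $o(|z|^{2})$ expansion into the quantitative bound $z\,h(z)\leq-(c-\varepsilon)|z|^{3}$ is exactly the point the paper glosses over when asserting local negative definiteness of the orbital derivative.
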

\begin{proof}
  First, observe that, under the given condition $0<a<1$, the coefficient of the leading term $|z(t)|z(t)$ of the asymptotic expansion on the right-hand side of Eq. \eqref{eq: CM-reduction-example-asym} is negative. Therefore, the function $\bar{V}:\R\ni z\mapsto (z^{2}/2)\in\R$ is positive definite whereas, in view of
  \begin{equation*}
    \bar{V}^{\prime}(z)\cdot h(z)=-\frac{1}{1-a}z^{2}|z|+o(|z^{3}|),
  \end{equation*}
  the orbital derivative of $\bar{V}$ along solutions of Eq. \eqref{eq: CM-reduction-example} is locally negative definite. Hence, we find some interval containing $0\in\R$ where $\bar{V}$ is a strict Lyapunov function for Eq. \eqref{eq: CM-reduction-example}. As, for instance, proven in Amann \cite[Chapter IV.18]{Amann1990}, it follows that the zero solution $z:\R\ni t\mapsto 0\in\R$ of Eq. \eqref{eq: CM-reduction-example} is locally asymptotically stable. But then Theorem \ref{thm: reduction} implies that the stationary point $\phi_{0}$ of $F$, or equivalently, the zero solution of Eq. \eqref{eq: gen_exchange}, is locally asymptotically stable as well. This finishes the proof.
\end{proof}

\begin{remark}
  1. In the case of the constant delay $r=1$, the statement of Proposition \ref{prop: example_asym} was proved in Brunovsk\'{y} et al. \cite[Corollary 5.1]{Brunovsky2004}.

  2. In the above proof we used a strict Lyapunov function. But that is by no means necessary. The key ingredient is the asymptotic expansion on the right-hand side of Eq. \eqref{eq: CM-reduction-example-asym} in combination with the observation that the coefficient of the leading term is negative. By starting from these, it is possible to carry out an elementary proof of the assertion without using any Lyapunov function.

  3. Of course, in the situation $a=1$ it is also possible to carry out a center manifold reduction including an asymptotic expansion of its right-hand side, in order to try to determine the stability property of $\phi_{0}=0$ by application of Theorem \ref{thm: reduction}. But observe that in the case $a=1$ the local center manifolds are two-dimensional as $\lambda_{0}$ is a double root of Eq. \eqref{eq: char}, and the stability analysis of the reduced differential equation seems to be much more difficult to access (compare Brunosk\'{y} et al. \cite{Brunovsky2004}).
\end{remark}


\begin{thebibliography}{10}
\bibitem{Amann1990}
Amann, H.: {Oridnary differenital equations. An introductin to nonlinear analysis.}
\newblock Studies in Mathematics 13, Walter de Gruyter (1990)

\bibitem{Brunovsky2004}
Brunovsk\'y, P., Erd\'elyi, A., Walther, H.-O.: {On a model of a currency exchange rate -- local stability and periodic solutions.}
\newblock Journal of Dynamics and Differential Equations, vol. 16(2), 393 -- 432 (2004)

\bibitem{Diekmann1995}
Diekmann, O., van Gils, S. A., Verduyn Lunel, S. M., and Walther, H. O.: {\it Delay Equations.}
\newblock Applied Mathematical Sciences 110, Springer.

\bibitem{Getto2014}
Getto, P., Waurick, M.: {A differential equation with state-dependent delay from cell population biology.}
\newblock Preprint, arXiv:1411.3097v1 (2014).

\bibitem{Hartung2006}
Hartung, F., Krisztin, T., Walther, H.O and Wu, J.: {\it Functional differential equations
with state-dependent delay.}
\newblock In: Handbook of Differential Equations: Ordinary Differential Equations, vol. \textbf{III},
435--545 (2006).

\bibitem{Krisztin2003}
Krisztin, T.: {\it A local unstable manifold for differential equations with state-dependent delay.}
\newblock Discrete and Continuous Dynamical Systems -- Series A, vol. 9(4), 993--1028 (2003).

\bibitem{Krisztin2006}
Krisztin, T.: {\it $C^{1}$-smoothness of center manifolds for differential equations with state-dependent delay.}
\newblock Fields Institute Communications 48, 213--226 (2006).

\bibitem{Pliss1964}
Pliss, V.: {\it A reduction principle in the theory of stability of motion (Russian).}
\newblock Izv. Akad. Nauk. SSSR Ser. Mat. \textbf{28}(6), 1297 -- 1324 (1964).

\bibitem{Qesmi2009}
Qesmi, R. and Walther H.O.: {\it Center-stable manifolds for differential equations with state-dependent delays.}
\newblock Discrete and Continuous Dynamical Systems -- Series A, vol. 23 (3), 1009-1033 (2009).

\bibitem{Stumpf2010}
Stumpf, E.: {\it On a differential equation with state-dependent delay: A global center-unstable manifold
bordered by a periodic orbit.}
\newblock{Doctoral thesis at University of Hamburg} (2010).

\bibitem{Stumpf2011}
Stumpf, E.: {\it The existence and {$C^1$}-smoothness of local center-unstable manifolds for differential equations with state-dependent delay.}
\newblock Rostocker Mathematisches Kolloquium \textbf{66}, 3--44 (2011).

\bibitem{Stumpf2012}
Stumpf, E.: {\it On a differential equation with state-dependent delay: A center-unstable manifold connecting an equilibrium and a periodic orbit.}
\newblock Journal of Dynamics and Differential Equations \textbf{24} (2), 197--248 (2012).

\bibitem{Stumpf2014}
Stumpf,E.: {\it Attraction property of local center-unstable manifolds for differential equations with state-dependent delay.}
\newblock{To appear in Electronic Journal of Qualitative Theory of Differential Equations}.

\bibitem{Vanderbauwhede1989} Vanderbauwhede, A., {\it Centre manifolds, normal forms and elementary bifurcations}.
Dynamics Reported, vol. 2 , 89-169 (1989).

\bibitem{Walther2003}
Walther, H.O.: {\it The solution manifold and $C^{1}$-smoothness for differential
  equations with state-dependent delay.}
\newblock Journal of Differential Equations \textbf{195}(1), 46--65 (2003).

\bibitem{Walther2004}
Walther, H.O.: {\it Smoothness properties of semiflows for differential equations
  with state-dependent delays.}
\newblock Journal of Mathematical Sciences \textbf{124}(4), 5193--5207 (2004).



\end{thebibliography}
\end{document}